\theoremstyle{definition}
\newtheorem{lemma}{Lemma}[section]
\newtheorem{theorem}[lemma]{Theorem}
\newtheorem{proposition}[lemma]{Proposition}
\newtheorem{definition}[lemma]{Definition}
\newtheorem{example*}[lemma]{Example}
\newtheorem{remark}[lemma]{Remark}
\theoremstyle{remark}
\DeclareRobustCommand{\qedify}[1]{%
  \ifmmode \quad\hbox{#1}
  \else
    \leavevmode\unskip\penalty9999 \hbox{}\nobreak\hfill
    \quad\hbox{#1}%
  \fi
}
\numberwithin{equation}{section}
\DeclareMathOperator{\supp}{supp}
\DeclareMathOperator{\cone}{cone}
\DeclareMathOperator*{\argmin}{arg\,min}
\newcommand{\PP}{\mathbb{P}}
\newcommand{\RR}{\mathbb{R}}
\newcommand{\EE}{\mathbb{E}}
\newcommand{\e}{\mathbf{e}}
\newcommand{\uu}{\mathbf{u}}
\newcommand{\vv}{\mathbf{v}}
\newcommand{\zz}{\mathbf{z}}
\newcommand{\yy}{\mathbf{y}}
\newcommand{\qq}{\mathbf{q}}
\newcommand{\ww}{\mathbf{w}}
\newcommand{\xx}{\mathbf{x}}
\newcommand{\cc}{\mathbf{c}}
\begin{document}

\author{Mateo D\'iaz}
\address{
Center for Applied Mathematics\\
Cornell University\\
Ithaca, NY 14853, USA.
}
\email{md825@cornell.edu}
 
\author{Mauricio Junca}
\address{
Departamento de Matem\'aticas\\
Universidad de los Andes\\
Bogot\'a, Colombia.
}
\email{mj.junca20@uniandes.edu.co}

\author{Felipe Rinc\'on}
\address{
Department of Mathematics\\
University of Oslo\\
Oslo, Norway.
}
\email{feliperi@math.uio.no}

\author{
Mauricio Velasco\\ 
}
\address{
Departamento de Matem\'aticas\\
Universidad de los Andes\\
Bogot\'a, Colombia.
}
\email{mvelasco@uniandes.edu.co}

\keywords{Compressed sensing, Statistical dimension, intrinsic volumes,  weighted $\ell_1$-norm, Monte Carlo algorithm}

\title{Compressed sensing of data with a known distribution}

\begin{abstract} 
Compressed sensing is a technique for recovering an unknown sparse signal from a small number of 
linear measurements. When the measurement matrix is random,
the number of measurements required for perfect recovery exhibits a phase transition: there is a threshold on the number of measurements after which the probability of exact recovery quickly goes from very small to very large. 
In this work we are able to reduce this threshold by incorporating statistical information about the data we wish to recover. Our algorithm works by minimizing a suitably weighted $\ell_1$-norm, where the weights are chosen so that the expected statistical dimension of the corresponding descent cone 
is minimized. We also provide new discrete-geometry-based Monte Carlo algorithms for computing intrinsic volumes of such descent cones, allowing us to bound the failure probability of our methods.  
\end{abstract}

\maketitle

\section{Introduction}

The sensing problem consists on trying to recover a signal $\mathbf{x}_0 \in \RR^d$ from $m$ linear measurements encoded in a vector $\mathbf{y}_0 := \mathbf{Ax}_0$, where $\mathbf{A}$ is a given $m \times d$ matrix with $m < d$. In the seminal works by Cand\`es, Romberg, and Tao~ \cite{Candes2005,Candes2006} and Donoho~\cite{Donoho06}, the following convex optimization algorithm is proposed as a possible solution:
\begin{equation}
\tag{P}
\label{problem:1}
\arraycolsep=2pt\def\arraystretch{1.5}
\begin{array}{lr}
\Delta(\mathbf{y}_0) := \argmin\limits_{\mathbf{x} \in \RR^d} \|\mathbf{x}\|_1 & \text{ s.t. }\mathbf{Ax} = \mathbf{y}_0.
\end{array}
\end{equation}

We say that the problem \eqref{problem:1} is \textit{successful} or that it performs a \textit{perfect recovery} for $\mathbf{A}$ and $\mathbf{x}_0$ if it has a unique solution and this solution is $\mathbf{x}_0$.  We cannot expect this method to work for arbitrary signals and measurements; by taking $m$ strictly less than $d$ we are collapsing dimensions and consequently losing information. However, if $\mathbf{A} \in \RR^{m \times d}$ is a random matrix with independent Gaussian entries, it is shown in \cite{Candes2006,Donoho06} that this method is successful with very high probability for all sufficiently sparse vectors, i.e., vectors with a low number of non-zero entries. 

These success guarantees were obtained by proving that matrices with Gaussian entries satisfy the so-called Restricted Isometry Property with high probability (for suitable choices of $m$ and $d$), and that this condition is sufficient to guarantee that \eqref{problem:1} is successful for all sufficiently sparse vectors $\mathbf{x}_0$. 

Although the Restricted Isometry Property is a sufficient condition for \eqref{problem:1} to be successful, it does not explain the phase transition phenomenon exhibited by the probability of perfect recovery: If the number of measurements exceeds certain level related to the sparsity of the signal, exact recovery is obtained with very high probability, and if the number of measurements is below this level, exact recovery occurs with very small probability. Much of the later work has thus focused on understanding the geometry behind this phase transition phenomenon. We now discuss some of these results in detail, as they are relevant for the approach taken in this paper.

\begin{definition}{(\textbf{Descent Cone})} For a point $\mathbf{x}_0 \in \RR^d$ and $f: \RR^d \rightarrow \RR$ a convex function, the descent cone $D(f, \mathbf{x}_0)$ of $f$ at $\mathbf{x}_0$ is given by 
\[D(f,\mathbf{x}_0) := \cone\{\mathbf{x}-\mathbf{x}_0 : f(\mathbf{x}) \leq f(\mathbf{x}_0) \}.\]
\end{definition} 
In other words, the descent cone is the cone generated by all directions from $\mathbf{x}_0$ in which $f$ decreases. The following simple but very useful theorem relates the geometry of descent cones to the recovery problem.  
\begin{theorem}{(\cite{cha})}\label{theorem:characterization} The compressed sensing method \eqref{problem:1}  is successful for $\mathbf{A}$ and $\mathbf{x}_0$ if and only if $D(\|\cdot\|_1 , \mathbf{x}_0) \cap \ker(\mathbf{A}) = \{\mathbf{0}\}$.
\end{theorem}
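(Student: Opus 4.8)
The plan is to reduce everything to the geometry of the feasible set, which for \eqref{problem:1} is the affine subspace $\mathbf{x}_0 + \ker(\mathbf{A})$: indeed $\mathbf{A}\mathbf{x} = \mathbf{y}_0 = \mathbf{A}\mathbf{x}_0$ holds exactly when $\mathbf{x} - \mathbf{x}_0 \in \ker(\mathbf{A})$. Successful recovery is therefore equivalent to the statement that $\|\mathbf{x}_0 + \mathbf{h}\|_1 > \|\mathbf{x}_0\|_1$ for every nonzero $\mathbf{h} \in \ker(\mathbf{A})$. First I would record a convenient description of the descent cone. Since $\{\mathbf{x} : \|\mathbf{x}\|_1 \leq \|\mathbf{x}_0\|_1\}$ is a sublevel set of a convex function it is convex, and the conic hull of a (nonempty) convex set $S$ is simply $\{t s : t \geq 0,\ s \in S\}$, because any nonnegative combination $t_1 s_1 + t_2 s_2$ equals $(t_1+t_2)$ times a convex combination of points of $S$. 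Hence $D(\|\cdot\|_1,\mathbf{x}_0) = \{t(\mathbf{x}-\mathbf{x}_0) : t \geq 0,\ \|\mathbf{x}\|_1 \leq \|\mathbf{x}_0\|_1\}$, so a nonzero vector lies in the descent cone precisely when it is a positive multiple of some $\mathbf{x} - \mathbf{x}_0$ with $\|\mathbf{x}\|_1 \leq \|\mathbf{x}_0\|_1$.

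For the forward direction I would argue by contraposition: a nonzero $\mathbf{h} \in D(\|\cdot\|_1,\mathbf{x}_0) \cap \ker(\mathbf{A})$ can, by the description above, be written as $\mathbf{h} = t(\mathbf{x}-\mathbf{x}_0)$ with $t > 0$ and $\|\mathbf{x}\|_1 \leq \|\mathbf{x}_0\|_1$. Then $\mathbf{x} - \mathbf{x}_0 = \mathbf{h}/t \neq \mathbf{0}$, and $\mathbf{A}\mathbf{h} = \mathbf{0}$ forces $\mathbf{A}\mathbf{x} = \mathbf{y}_0$, so $\mathbf{x}$ is a feasible competitor with $\mathbf{x} \neq \mathbf{x}_0$ whose $\ell_1$-norm does not exceed that of $\mathbf{x}_0$; this contradicts either optimality or uniqueness of $\mathbf{x}_0$, so recovery cannot be successful. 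Conversely, assuming the intersection is trivial, I would take any feasible $\mathbf{x} \neq \mathbf{x}_0$ and set $\mathbf{h} = \mathbf{x} - \mathbf{x}_0 \in \ker(\mathbf{A}) \setminus \{\mathbf{0}\}$; were $\|\mathbf{x}\|_1 \leq \|\mathbf{x}_0\|_1$, then $\mathbf{h}$ (with $t=1$) would lie in $D(\|\cdot\|_1,\mathbf{x}_0) \cap \ker(\mathbf{A}) = \{\mathbf{0}\}$, a contradiction. Thus $\|\mathbf{x}\|_1 > \|\mathbf{x}_0\|_1$ for every feasible $\mathbf{x} \neq \mathbf{x}_0$, which is exactly the statement that $\mathbf{x}_0$ is the unique minimizer.

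The argument is short, and the only point demanding care — what I expect to be the main obstacle — is the bookkeeping around \emph{uniqueness} rather than mere optimality. The definition of the descent cone uses the weak inequality $f(\mathbf{x}) \leq f(\mathbf{x}_0)$, so it records directions along which the norm stays constant as well as those along which it strictly decreases; this is precisely what allows a competitor $\mathbf{x}$ with $\|\mathbf{x}\|_1 = \|\mathbf{x}_0\|_1$ but $\mathbf{x} \neq \mathbf{x}_0$ to produce a nonzero element of the intersection. I would make sure both directions of the equivalence treat this equality case consistently, and note the harmless edge case $\mathbf{x}_0 = \mathbf{0}$, where the sublevel set collapses to $\{\mathbf{0}\}$, so the descent cone is $\{\mathbf{0}\}$ and both sides of the equivalence hold trivially.
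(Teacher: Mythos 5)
Your proof is correct. Note that the paper itself offers no proof of this statement: it is imported as a known result from \cite{cha}, so there is no internal argument to compare against. Your argument --- identifying the feasible set with $\mathbf{x}_0 + \ker(\mathbf{A})$, using convexity of the sublevel set to write the descent cone as $\{t(\mathbf{x}-\mathbf{x}_0) : t \geq 0,\ \|\mathbf{x}\|_1 \leq \|\mathbf{x}_0\|_1\}$, and handling the equality case $\|\mathbf{x}\|_1 = \|\mathbf{x}_0\|_1$ via uniqueness rather than mere optimality --- is precisely the standard argument given in that reference, and your attention to the weak-inequality/uniqueness bookkeeping and to the degenerate case $\mathbf{x}_0 = \mathbf{0}$ is exactly the care the statement requires.
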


If $\mathbf{A} \in \RR^{m \times d}$ is a random matrix with i.i.d. entries N$(0,1)$ then $\ker (\mathbf{A})$ is uniformly distributed over the Grassmannian $\text{Gr}(d-m,\RR^d)$ of $(d-m)$-dimensional subspaces of $\RR^d$ (see for instance \cite{Goldstein}). It follows that if $K$ is a fixed subspace of dimension $d-m$ and $\mathbf{Q} \in \text{O}(d)$ is a random rotation matrix, chosen with the Haar measure on $\text{O}(d)$, then  
\begin{align*}
    \PP_\mathbf{A} \{\text{\eqref{problem:1} is successful for }\mathbf{x}_0\text{ and }\mathbf{A}\}
    & = \PP_\mathbf{Q}\left\{ D(\|\cdot\|_1 , \mathbf{x}_0) \cap \mathbf{Q}K = \{\mathbf{0}\} \right\}.
\end{align*}

The problem of computing the probability that a random subspace intersects a given cone is a central problem in integral geometry, and there are explicit formulas for this probability in terms of the so-called intrinsic volumes. For the purposes of this paper we will only describe the basic concepts related to our problem, and we refer the reader to \cite{schneider2008,Ame15} for a more extended treatment. We follow the point of view presented in \cite{Ame}, where the concept of statistical dimension of a cone is introduced as the key invariant to understand phase transitions. An alternative approach to phase transitions is presented in \cite{cha} in terms of Gaussian widths of descent cones. Such phase transitions can also be interpreted as a qualitative change in the facial structure of polyhedra (crosspolytopes or simplices) under random projections (see \cite{dontan} and \cite{Ver} for details).

Let $C \subseteq \RR^d$ be any closed convex set, and let $\mathbf{x}\in \RR^d$. The projection of $\mathbf{x}$ onto $C$ is $\pi_C(\mathbf{x}) := \argmin\left\{\|\mathbf{x} - \mathbf{y}\|_2 : \mathbf{y} \in C\right\}$.

\begin{definition}{(\textbf{Intrinsic Volumes})} Let $C$ be a polyhedral cone in $\RR^d$. For each $0 \leq k \leq d$, the $k$th intrinsic volume $\nu_k(C)$ is given by 
\begin{equation*}
\nu_k(C) := \PP_ \mathbf{g}\{ \pi_C (\mathbf{g}) \text{ lies in the interior of a } k \text{-dimensional face of } C\},
\end{equation*}
where $\mathbf{g}$ is a standard normal random vector in $\RR^d$.
\end{definition}

There are many interesting results concerning intrinsic volumes, and also many open questions; see \cite{schneider2008,Ame,Ame15}. The intrinsic volumes of a cone $C$ give rise to a discrete probability measure over the set $\{0,1,\dots, d\}$. The expected value of a random variable with this distribution is called the statistical dimension of $C$.
\begin{definition}{(\textbf{Statistical Dimension})} Let $C\subseteq \RR^d$ be a polyhedral cone. The statistical dimension $\delta(C)$ is
\begin{equation*}
\delta(C) := \sum_{k=0}^d k \nu_k(C).
\end{equation*} 
\end{definition}
For many theoretical results it is useful to have the following equivalent definition. 
\begin{lemma}{(\cite{Ame})}
If $C\subseteq \RR^d$ is a polyhedral cone then its statistical dimension is equal to
\begin{equation*}
\delta(C) := \EE_ \mathbf{g}[\|\pi_C(\mathbf{g})\|_2^2],
\end{equation*} 
where $\mathbf{g}$ is a standard normal random vector in $\RR^d$.
\end{lemma}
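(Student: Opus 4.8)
The plan is to condition on which face of $C$ contains the projection $\pi_C(\mathbf{g})$. Since the relative interiors of the faces of $C$ partition $C$ and $\pi_C(\mathbf{g}) \in C$ always, for each face $F$ of $C$ I define the event $A_F := \{\pi_C(\mathbf{g}) \in \operatorname{relint}(F)\}$, so that the $A_F$ form a partition of the sample space and, grouping faces by dimension, $\sum_{F:\dim F = k} \PP(A_F) = \nu_k(C)$. Consequently
\[
\delta(C) = \sum_{k=0}^d k\,\nu_k(C) = \sum_F \dim(F)\,\PP(A_F),
\]
while $\EE_{\mathbf g}[\|\pi_C(\mathbf g)\|_2^2] = \sum_F \EE_{\mathbf g}[\|\pi_C(\mathbf g)\|_2^2\, \mathbf 1_{A_F}]$. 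It therefore suffices to establish the face-by-face identity $\EE_{\mathbf g}[\|\pi_C(\mathbf g)\|_2^2\, \mathbf 1_{A_F}] = \dim(F)\,\PP(A_F)$.

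The geometric heart of the argument is that on $A_F$ the conic projection coincides with a linear one. Fix a face $F$, set $L := \spann(F)$ with $k := \dim L = \dim F$, and write $\pi_L$ for orthogonal projection onto the subspace $L$. The optimality condition for projection onto a convex set gives $\mathbf g - \pi_C(\mathbf g) \in N_C(\pi_C(\mathbf g))$, where $N_C(z) := \{v : \langle v, x - z\rangle \le 0 \text{ for all } x \in C\}$. For $z \in \operatorname{relint}(F)$ one can move within $L$ in both directions without leaving $C$, which forces $N_C(z) \subseteq L^\perp$; since $z \in L$ and $\mathbf g - z \in L^\perp$, uniqueness of the orthogonal projection yields $z = \pi_L(\mathbf g)$. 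Using that the normal cone is constant along $\operatorname{relint}(F)$ (a standard fact for polyhedral cones), call it $N_F \subseteq L^\perp$, I obtain the clean description
\[
A_F = \{\pi_L(\mathbf g) \in \operatorname{relint}(F)\} \cap \{\pi_{L^\perp}(\mathbf g) \in N_F\},
\]
and on this event $\pi_C(\mathbf g) = \pi_L(\mathbf g)$.

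For the probability computation, note that since $\mathbf g$ is standard Gaussian and $L \perp L^\perp$, the components $\pi_L(\mathbf g)$ and $\pi_{L^\perp}(\mathbf g)$ are independent, so the product form of $A_F$ factors the expectation:
\[
\EE_{\mathbf g}[\|\pi_C(\mathbf g)\|_2^2\, \mathbf 1_{A_F}] = \EE\big[\|\pi_L(\mathbf g)\|_2^2\, \mathbf 1\{\pi_L(\mathbf g) \in \operatorname{relint}(F)\}\big]\cdot \PP\{\pi_{L^\perp}(\mathbf g)\in N_F\}.
\]
Now regard $\pi_L(\mathbf g)$ as a standard Gaussian on $L \cong \RR^k$ and split it into its radius $R := \|\pi_L(\mathbf g)\|_2$ and direction $\Theta := \pi_L(\mathbf g)/R$, which are independent. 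Because $F$ is a cone, the event $\{\pi_L(\mathbf g) \in \operatorname{relint}(F)\}$ depends only on $\Theta$; hence $R^2$ and that indicator are independent, giving
\[
\EE\big[\|\pi_L(\mathbf g)\|_2^2\, \mathbf 1\{\pi_L(\mathbf g)\in\operatorname{relint}(F)\}\big] = \EE[R^2]\,\PP\{\pi_L(\mathbf g)\in\operatorname{relint}(F)\} = k\cdot\PP\{\pi_L(\mathbf g)\in\operatorname{relint}(F)\},
\]
since $\EE[R^2] = \EE\|\pi_L(\mathbf g)\|_2^2 = k$. Recombining the two factors recovers exactly $k\,\PP(A_F)$, and summing over all faces completes the proof.

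The crux of the whole argument is the radial–angular independence of the Gaussian: it is precisely what lets the squared-norm contribution of each face reduce to its dimension, decoupled from the (generally hard to compute) probability that the projection lands in that face. The remaining ingredients — that relative interiors of faces tile $C$ so the $A_F$ partition the sample space, that $N_C$ is constant on $\operatorname{relint}(F)$ and contained in $L^\perp$, and that boundary coincidences occur with probability zero — are standard facts about polyhedral cones, and I expect these to be routine rather than the substantive obstacle.
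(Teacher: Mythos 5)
Your proof is correct, but note that the paper itself contains no proof of this lemma: it is quoted as a known result from \cite{Ame}, so there is no internal argument to compare against. What you have written is essentially the standard proof from that literature (the face-conditioning argument behind the ``Master Steiner formula''): partition the sample space according to which face's relative interior receives $\pi_C(\mathbf{g})$, observe that on the event $A_F$ the conic projection agrees with the orthogonal projection onto $L = \spann(F)$ and that $A_F$ factors as $\{\pi_L(\mathbf{g})\in\operatorname{relint}(F)\}\cap\{\pi_{L^\perp}(\mathbf{g})\in N_F\}$, then use independence of the two Gaussian components together with radial--angular independence on $L$ to pull out $\EE\|\pi_L(\mathbf{g})\|_2^2 = \dim F$. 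Each supporting fact you invoke is indeed standard for polyhedral cones: relative interiors of faces tile the cone, the normal cone is constant on $\operatorname{relint}(F)$ and lies in $L^\perp$, and the variational characterization $z = \pi_C(\mathbf{g}) \iff z \in C,\ \mathbf{g}-z\in N_C(z)$ gives both inclusions of your factorization. The zero-dimensional face is the only degenerate case ($L=\{0\}$, so both sides of the face-by-face identity vanish), and your argument handles it harmlessly. So the proposal is a complete, self-contained proof of a statement the paper merely cites.
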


The statistical dimension of the cone $D(\|\cdot\|_1 , \mathbf{x}_0)$ seems to be very close to the inflection point of the phase transition. A similar observation was made in \cite{cha} using the concept of Gaussian width. 
More precisely, given a convex function $f: \RR^d \rightarrow \RR$ and $\mathbf{x}_0 \in \RR^d$, consider the convex optimization problem
\begin{equation}
\tag{P$_f$}
\label{problem:f}
\arraycolsep=2pt\def\arraystretch{1.5}
\begin{array}{lr}
\min\limits_{\mathbf{x} \in \RR^d} f(\mathbf{x}) & \text{ s.t. }\mathbf{Ax} = \mathbf{Ax}_0.
\end{array}
\end{equation}

\begin{theorem}{(\cite{Ame})}\label{teo:mainAme}
Fix a tolerance $\eta \in (0,1)$. Let $\mathbf{x}_0  \in \RR^d$, $f: \RR^d \rightarrow \RR$ a convex function, and $\mathbf{A} \in \RR^{m \times d}$ a random matrix with i.i.d. entries N$(0,1)$. Then, 
$$
\begin{array}{rcl}
m \leq \delta(D(f, \mathbf{x}_0)) - a_\eta \sqrt{d} & \Longrightarrow  & \text{\eqref{problem:f} succeeds with probability} \leq \eta \\
m \geq \delta(D(f, \mathbf{x}_0)) + a_\eta \sqrt{d} & \Longrightarrow & \text{\eqref{problem:f} succeeds with probability} \geq 1 - \eta,
\end{array}
$$
where $a_\eta := \sqrt{8\log(4/\eta)}.$
\end{theorem}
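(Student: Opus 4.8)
The plan is to derive the theorem from the \emph{approximate kinematic formula} of conic integral geometry, after reducing the success of \eqref{problem:f} to the event that the descent cone avoids a randomly rotated subspace.

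First I would invoke the characterization of \cite{cha} in the form valid for an arbitrary convex $f$ (Theorem~\ref{theorem:characterization} being the case $f=\|\cdot\|_1$): the program \eqref{problem:f} succeeds for $\mathbf{A}$ and $\mathbf{x}_0$ if and only if $D(f,\mathbf{x}_0)\cap\ker(\mathbf{A})=\{\mathbf{0}\}$. Writing $C:=D(f,\mathbf{x}_0)$ and arguing exactly as in the display preceding the theorem---so that $\ker(\mathbf{A})$ is distributed as $\mathbf{Q}K$ for a fixed $(d-m)$-dimensional subspace $K$ and a Haar-random $\mathbf{Q}\in\mathrm{O}(d)$---I obtain
\[
\PP_\mathbf{A}\{\text{\eqref{problem:f} succeeds}\}=\PP_\mathbf{Q}\{C\cap\mathbf{Q}K=\{\mathbf{0}\}\}.
\]
As a cone, a subspace of dimension $d-m$ has $\nu_{d-m}(K)=1$ and all other intrinsic volumes zero, so $\delta(K)=d-m$. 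Consequently the two hypotheses $m\le\delta(C)-a_\eta\sqrt d$ and $m\ge\delta(C)+a_\eta\sqrt d$ are equivalent to $\delta(C)+\delta(K)\ge d+a_\eta\sqrt d$ and $\delta(C)+\delta(K)\le d-a_\eta\sqrt d$, respectively. The theorem thus reduces to showing that the two cones meet with probability at least $1-\eta$ in the first case and at most $\eta$ in the second.

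The heart of the argument is the exact spherical kinematic formula of Schneider--Weil, which writes $\PP_\mathbf{Q}\{C\cap\mathbf{Q}K\neq\{\mathbf{0}\}\}$ as a bilinear expression in the intrinsic volumes of the two cones. Because $K$ is a subspace, all but one of its intrinsic volumes vanish and this bilinear form collapses to a single parity-restricted tail of the intrinsic volumes of $C$; up to a factor and parity bookkeeping it reads $\PP_\mathbf{Q}\{C\cap\mathbf{Q}K\neq\{\mathbf{0}\}\}=2\sum_{j\ge1}\nu_{m+2j-1}(C)$. I would compare this with the tail $\PP\{V_C\ge m\}$ of the integer random variable $V_C$ on $\{0,\dots,d\}$ with law $\PP\{V_C=k\}=\nu_k(C)$, whose mean is $\EE[V_C]=\delta(C)$ by definition. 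So the cones meet precisely when $V_C$ typically exceeds $m$, and avoid one another when $V_C$ typically falls below $m$.

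To finish, I would prove that $V_C$ concentrates within $O(\sqrt d)$ of $\delta(C)$. The lever is the second description $\delta(C)=\EE_\mathbf{g}[\|\pi_C(\mathbf{g})\|_2^2]$: since projection onto a convex cone is nonexpansive, the map $\mathbf{g}\mapsto\|\pi_C(\mathbf{g})\|_2$ is $1$-Lipschitz, and Gaussian concentration yields subgaussian tails; the master Steiner formula links the law of $\|\pi_C(\mathbf{g})\|_2^2$ to the intrinsic-volume distribution closely enough to transfer this concentration to $V_C$. Calibrating the variance proxy produces a tail bound of the shape $\PP\{|V_C-\delta(C)|\ge t\}\le 2\exp(-t^2/(8d))$, and substituting $t=a_\eta\sqrt d$ with $a_\eta=\sqrt{8\log(4/\eta)}$ gives exactly the thresholds claimed. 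The main obstacle is the geometric input: the exact kinematic formula is a deep theorem of spherical integral geometry, and obtaining the \emph{sharp} concentration constant---rather than merely $O(\sqrt d)$ fluctuations---is what pins down $a_\eta$ and hence the precise width of the transition window. Converting the bilinear kinematic expression into a clean one-sided tail of $V_C$ (tracking the parity-restricted sum, the factor $2$, and the degenerate case in which $C$ is itself a subspace, where the intersection event becomes essentially deterministic) is the delicate bookkeeping that remains.
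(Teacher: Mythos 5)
Your proposal is correct and follows essentially the same route as the paper: the paper does not prove Theorem~\ref{teo:mainAme} itself but cites \cite{Ame}, remarking that the proof rests on the kinematic formula, and your sketch---the descent-cone characterization, reduction to a Haar-random subspace with $\delta(K)=d-m$, the exact kinematic formula, the interlacing $2h_k \geq t_k \geq 2h_{k+1}$ converting parity-restricted half-tails into tails of the intrinsic-volume random variable, and Gaussian-type concentration of that variable at scale $\sqrt{d}$---is precisely the argument of \cite{Ame} that the paper summarizes in the definitions, kinematic formula, and remark immediately following the theorem statement. The two points you leave open (the sharp constant in the concentration inequality pinning down $a_\eta$, and the degenerate case where the descent cone is itself a subspace, for which the stated kinematic formula does not apply and the intersection event is deterministic) are exactly the technical steps carried out in \cite{Ame}, and you correctly identify them as such.
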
 

The proof of this theorem is based on the \textit{kinematic formula} from integral geometry, which relates the probability of success with the intrinsic volumes of the corresponding descent cone.

\begin{definition} \label{def:tails}
Let $C \subseteq \RR^d$ a closed convex cone. For each $k \in \{0,1, \dots, d\}, $ the $k$th tail functional is defined as
\[t_k(C) := \sum\limits_{j=k}^d \nu_j(C).\]
Similarly, the $k$th half-tail functional is defined as 
\[h_k(C) := \sum\limits_{\substack{j=k \\ j -k  \text{ even}}}^d \nu_j(C).\] 
\end{definition}

\begin{theorem}{(\textbf{Kinematic formula} \cite{Ame15})} 
Let $C \subseteq \RR^d$ a closed convex cone and $L \subseteq \RR^d$ a linear subspace of dimension $d-m$. Then 
$$\PP_\mathbf{Q}\left\{ C \cap \mathbf{Q}L = \{\mathbf{0}\} \right\} = 1 - 2h_{m+1}(C).$$
\end{theorem}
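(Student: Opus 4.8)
The plan is to deduce the formula from two classical facts of conic integral geometry together with a short sign computation. The first ingredient is the \emph{conic kinematic formula}: for closed convex cones $C, D \subseteq \RR^d$ that are not both linear subspaces, and $\mathbf{Q}$ Haar-distributed on $\text{O}(d)$, one has
\[\EE_\mathbf{Q}[\nu_k(C \cap \mathbf{Q}D)] = \sum_{i+j = d+k}\nu_i(C)\,\nu_j(D) \qquad (k \geq 1),\]
the spherical analogue of the Euclidean Hadwiger kinematic formula. The second ingredient is the conic \emph{Gauss--Bonnet relation}: for a closed convex cone $K$ the alternating sum $\sum_{k=0}^d (-1)^k \nu_k(K)$ equals $(-1)^{\dim K}$ when $K$ is a linear subspace and vanishes otherwise. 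Combined with the normalization $\sum_{k=0}^d \nu_k(K) = 1$, these reduce the theorem to bookkeeping.

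First I would specialize the kinematic formula to $D = L$. Since $L$ is a subspace of dimension $d-m$, its intrinsic volumes are concentrated in a single index, $\nu_j(L) = 1$ if $j = d-m$ and $0$ otherwise. Writing $K := C \cap \mathbf{Q}L$, the only surviving term in the convolution is the one with $j = d-m$, forcing $i = k+m$, so that
\[\EE_\mathbf{Q}[\nu_k(K)] = \nu_{k+m}(C) \qquad (k \geq 1).\]
In other words, the expected higher intrinsic volumes of the random slice $K$ are literally the intrinsic volumes of $C$ shifted by $m$.

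Next I would turn the Gauss--Bonnet relation into a probabilistic identity. Assuming for the moment that $C$ is pointed, the slice $K \subseteq C$ contains no line, so $K$ is either $\{\mathbf{0}\}$ or is not a subspace; hence the alternating sum equals the indicator $\mathbf{1}\{K = \{\mathbf{0}\}\}$ almost surely. Taking expectations and using the normalization to eliminate the $k=0$ term gives
\[\PP_\mathbf{Q}\{K = \{\mathbf{0}\}\} = \sum_{k=0}^{d} (-1)^k \EE_\mathbf{Q}[\nu_k(K)] = 1 + \sum_{k=1}^{d}\big((-1)^k - 1\big)\,\nu_{k+m}(C).\]
The bracket vanishes for even $k$ and equals $-2$ for odd $k$, so only the odd terms survive; reindexing by $j = k+m$ leaves exactly the indices $j \geq m+1$ with $j - (m+1)$ even, which is precisely $-2\,h_{m+1}(C)$. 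This yields $\PP_\mathbf{Q}\{K = \{\mathbf{0}\}\} = 1 - 2h_{m+1}(C)$, as claimed.

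The main obstacle lies in the structural inputs rather than the algebra. The conic kinematic formula is the genuinely deep ingredient: it is established through the invariant density on $\text{O}(d)$ and a Crofton-type integration over the sphere $S^{d-1}$, or via a Hadwiger-style characterization of rotation-invariant valuations on spherically convex sets. A second, more delicate point specific to this setting is that the $\ell_1$ descent cones $D(\|\cdot\|_1, \mathbf{x}_0)$ are \emph{not} pointed --- their lineality space has dimension $|\supp(\mathbf{x}_0)| - 1$ --- so the clean identity ``alternating sum $=$ indicator'' fails and must be replaced by an argument accounting for the lineality space. I would handle this through the decomposition $C = \Lambda \oplus C'$ of a cone into its lineality space $\Lambda$ and a pointed cone $C'$ in $\Lambda^\perp$, together with the product rule $\nu_\bullet(C) = \nu_\bullet(\Lambda) * \nu_\bullet(C')$ for intrinsic volumes, reducing the general statement to the pointed case; alternatively one may invoke the fully general kinematic formula of \cite{Ame15} directly.
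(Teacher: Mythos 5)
The paper does not actually prove this statement: it is quoted directly from \cite{Ame15} (it is the conic Crofton formula of Amelunxen--Lotz), so there is no internal argument to compare yours against. Judged on its own merits, your derivation is correct, and it is essentially the standard route by which this formula is obtained in the conic integral geometry literature: specialize the conic kinematic formula to a subspace to get $\EE_\mathbf{Q}[\nu_k(C\cap \mathbf{Q}L)] = \nu_{k+m}(C)$ for $k\geq 1$, combine with the conic Gauss--Bonnet relation and the normalization $\sum_k \nu_k = 1$, and do the parity bookkeeping, which produces exactly the half-tail $h_{m+1}(C)$. You also handle the two genuinely delicate points correctly: the $k=0$ term (where the kinematic formula fails, as a two-rays-in-$\RR^2$ example shows) is eliminated via the normalization rather than applied directly, and you rightly flag that the ``alternating sum equals indicator'' step needs pointedness, which the paper's descent cones lack --- by Lemma \ref{prop:isometric} the lineality space of $D(I,\mathbf{w})$ has dimension $|I|-1$.

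Two completions of your sketch are worth recording. First, the theorem as stated in the paper is false when $C$ is a linear subspace of dimension exceeding $m$ (the left side is then $0$ while the right side is $\pm 1$); the hypothesis that $C$ is not a linear subspace is silently omitted in the paper and must be assumed in any proof, and your Gauss--Bonnet step is precisely where it enters. Second, your reduction of the non-pointed case can be made direct without the full product-rule machinery: writing $\Lambda$ for the lineality space with $\ell := \dim \Lambda$, any subspace contained in $C\cap\mathbf{Q}L$ lies in $\Lambda\cap\mathbf{Q}L$; when $\ell \leq m$ this intersection is almost surely $\{\mathbf{0}\}$, so your indicator identity holds almost surely and the pointed argument runs verbatim, while when $\ell > m$ both sides equal $0$ --- the left trivially, and the right because $\nu_k(C)=\nu_{k-\ell}(C')$ for the pointed part $C'$ forces $h_{m+1}(C)=\tfrac{1}{2}$ when $C$ is not a subspace. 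These are refinements, not corrections: your argument is sound.
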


\begin{remark} It is shown in \cite{Ame} that for each closed convex cone $C \subseteq \RR^d$ that is not a linear subspace, 
\[2h_k(C) \geq t_k(C) \geq 2 h_{k+1}(C) \hspace{0.8cm} \text{for }k = 0,1,2, \cdots, d-1.\] Thus, combining the previous two results, we conclude that $1-t_m(D(\|\cdot\|_1, \mathbf{x}_0))$ is very close to the exact probability of perfect recovery. We can therefore think of the probability of failure as a ``tail estimate" of the distribution of intrinsic volumes.
\end{remark}
Unfortunately, even for polyhedral cones such as $D(\|\cdot\|_1 , \mathbf{x}_0)$, there is no simple closed formula for intrinsic volumes. One of the main contributions of this work is to present an efficient algorithm to estimate these intrinsic volumes via a combination of discrete geometry and Monte Carlo simulations (Section \ref{section: Formulas}).

\subsection*{Our aim: compressed sensing and statistics.} In the most common setup for Compressed Sensing, the only assumption about the signal $\mathbf{x}_0$ is that it is sparse. However, in real applications this signal is often a random vector whose distribution can be approximated by using its previous realizations, i.e., historical information on the signal. It is therefore natural to ask: Can we modify the compressive sensing paradigm to use to our advantage this extra distributional information? This new setting opens the possibility of further reducing the number of measurements $m$ needed for perfect recovery.

The main contribution of this article is to answer this question affirmatively by weighting the norm used in the recovery procedure. More specifically, we consider the optimization problem 
\begin{equation}
\tag{P$_{\mathbf w}$}
\arraycolsep=2pt\def\arraystretch{1.5}
\begin{array}{lr}
\Delta_\mathbf{w}(\mathbf{y}_0) = \argmin\limits_{\mathbf{x}\in \RR^d}  \|\mathbf{x}\|_1^\mathbf{w} &\text{ s.t. }\mathbf{Ax} = \mathbf{y}_0.
\end{array}
\end{equation}
where $\|\mathbf{x}\|^\mathbf{w}_1 = \sum\limits_{i=1}^d w_i|x_i|$ and $\ww\in \RR^d$ is a vector of weights to be specified.

We study the question of how to choose weights in order to reduce the number $m$ of measurements necessary for perfect recovery. Our main contributions are organized as follows:
\begin{enumerate}
\item In Section~\ref{section:weighted} we introduce the idea of random descent cones and the related concepts of expected statistical dimension and expected intrinsic volumes. We show in Theorem~\ref{theorem:concentratedDimensions} that the expected statistical dimension $\overline{\delta}(\ww)$ plays a
key role in phase transitions of compressive sensing of random signals. 

\item Motivated by $(1)$, in Theorem~\ref{teo:ProtoOptimal} we give analytic formulas, depending on the distribution of the signal $\mathbf{x}_0$, for weights which minimize an upper bound on the expected statistical dimension. 

\item Aiming to analyze the quality of the bound in $(2)$, in Section~\ref{section: Formulas} we study the geometry of the descent cones for the weighted norms $\|\cdot\|^\mathbf{w}_1$. In Theorem~\ref{thm: main} we give closed formulas for the projection maps onto such cones. Using these explicit formulas, we introduce new Monte Carlo algorithms for efficiently computing the expected statistical dimension and the failure probabilities for weighted compressed sensing. 

These Monte Carlo algorithms allow us to see that the upper bound obtained in $(2)$ and the actual value of the expected statistical dimension are often very close (see Section ~\ref{Bernoulli}). We also discover that there are distributions for which minimizing the expected statistical dimension underperforms the unweighted approach in some regimes (see Section~\ref{NonSharp} for an example).

\item In Section~\ref{section:DescentAlgo} we give a second discrete-geometry based Monte Carlo algorithm to search for weights which are local minima of the expected statistical dimension, via stochastic gradient descent. 

\item Finally, in Section \ref{Applications} we test the performance of our algorithms in a few numerical examples. The examples suggest that the weighted approach is often superior to the unweighted approach. Our experiments suggest that this occurs in cases of practical interest such as brain MRI data. Section~\ref{section:conclusions} contains the conclusions and discusses a few open problems.
\end{enumerate}


\subsection*{Related work}

Compressed sensing with prior information has been studied in the past under different models. In each model the known information is different. For example, the paper \cite{mota2014compressed} analyzes the case where a signal similar to the one to be recovered is known beforehand. Also, \cite{vaswani2010modified, friedlander2012recovering} assume information about the support of the signal $\mathbf{x}_0$. Specifically, the first paper assumes to know part of the support entries, and the second one assumes to have prior knowledge about the support location. All these analyses mainly rely on the Restricted Isometry Property, an approach that we do not pursue in this work.  

The idea of using weighted $\ell_1$-minimization in this subject was first introduced in \cite{candes2008enhancing}. This paper proposes an iterative algorithm with dynamic weights to reduce the number of necessary measurements to recover the signal. Weighted $\ell_1$-minimization has also been investigated under probabilistic hypotheses. The papers \cite{xu2010compressive, khajehnejad2009weighted} consider the case in which the allowed non-zero entries of the vector fall into two sets, and each set has a different probability of being non-zero; \cite{khajehnejad2011analyzing} generalizes this model to $n$ sets (our numerical example \ref{Bernoulli} falls into this setting). The article  \cite{misra2015weighted} considers a Bayesian setting, where the entries are independent and the probability of being non-zero is given by a continuous function. This paper independently obtained a result analogous to Theorem \ref{teo:ProtoOptimal} below, but our methods give the first tools to systematically explore their quality. The study done in this work is based on Grassmmann angles; our approach using intrinsic volumes is implicitly related to these concepts. 

Weighted $\ell_1$-minimization has also recently been used in other contexts. For instance, \cite{rauhut2015interpolation} develops a theory about the use of weighted norms to better interpolate smooth functions that are also sparse. Similarly, \cite{schiebinger2015superresolution} uses a weighting function $w(\cdot)$ to tackle the problem of superresolution imagining. In particular, the paper studies how to recover a point measure that encodes a signal by using a convex algorithm and a relatively small set of measurements for this signal.

\section{Compressed sensing with a priori distributions}\label{section:weighted}

Let $\mathbf{X}_0\in \RR^d$ be the data we wish to recover and assume that $\mathbf{X}_0$ follows some known distribution $\mathcal{F}$. In order to reduce the number of necessary measurements $m$, we aim to increase the probability of \eqref{problem:1} being successful. Theorem \ref{theorem:characterization} gives us a good insight on what we can do. Imagine that we have a particular point $\mathbf{x'}$ with very high probability. Then, if we can modify the $\ell_1$-norm in order to sharpen the descent cone at $\mathbf{x'}$ and reduce the probability of intersection with the kernel of $\mathbf{A}$, we should be able to increase the probability of perfect recovery; see Figure \ref{fig:conos}. How to obtain a good modification of the $\ell_1$-norm based on the distribution $\mathcal{F}$ is explained in this section.

\begin{figure}[htb]
\begin{centering}
        \begin{subfigure}[b]{0.3\textwidth}
                \includegraphics[width=\textwidth]{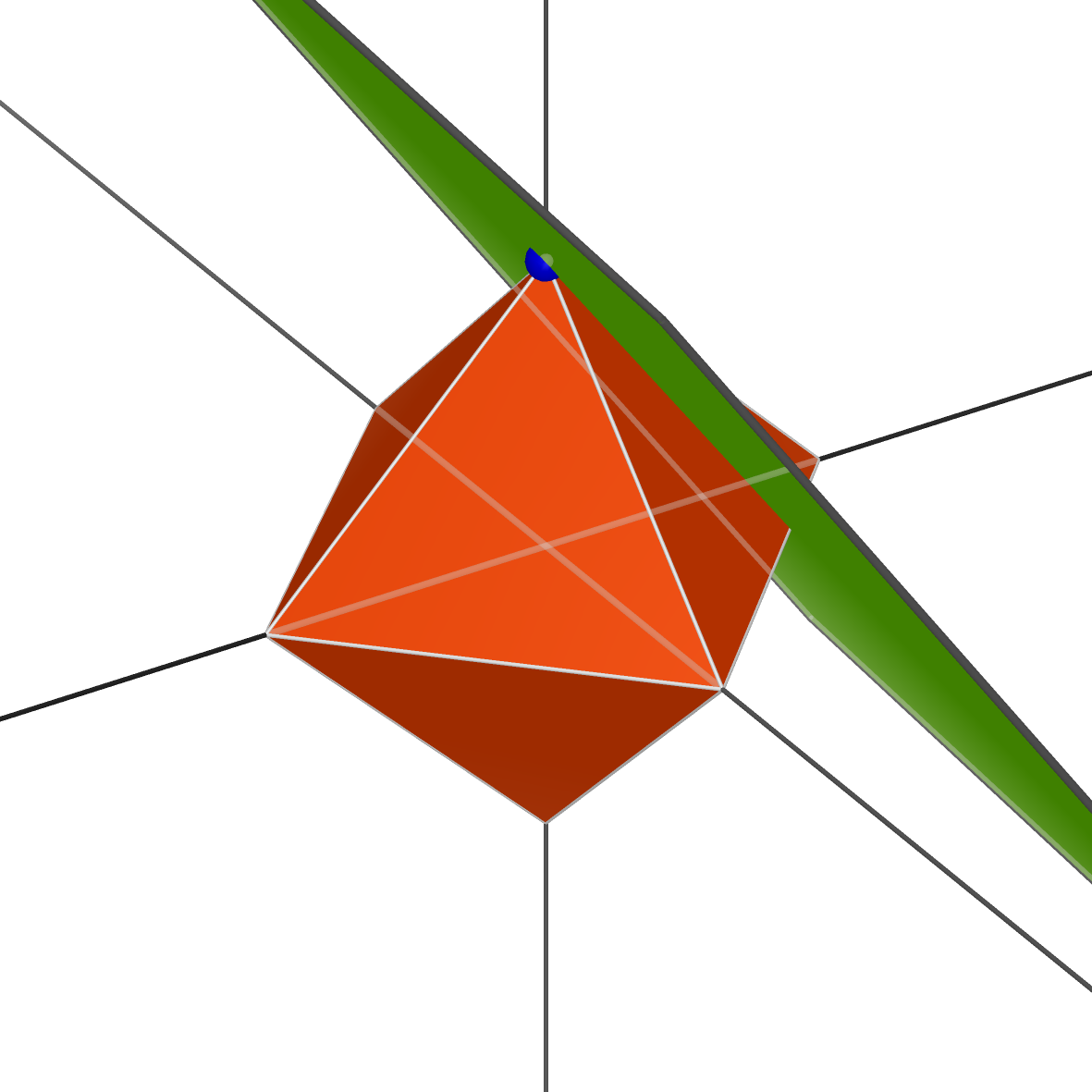}
                \label{fig:polytope1}
        \end{subfigure}
        \qquad\qquad
        \begin{subfigure}[b]{0.3\textwidth}
                \includegraphics[width=\textwidth]{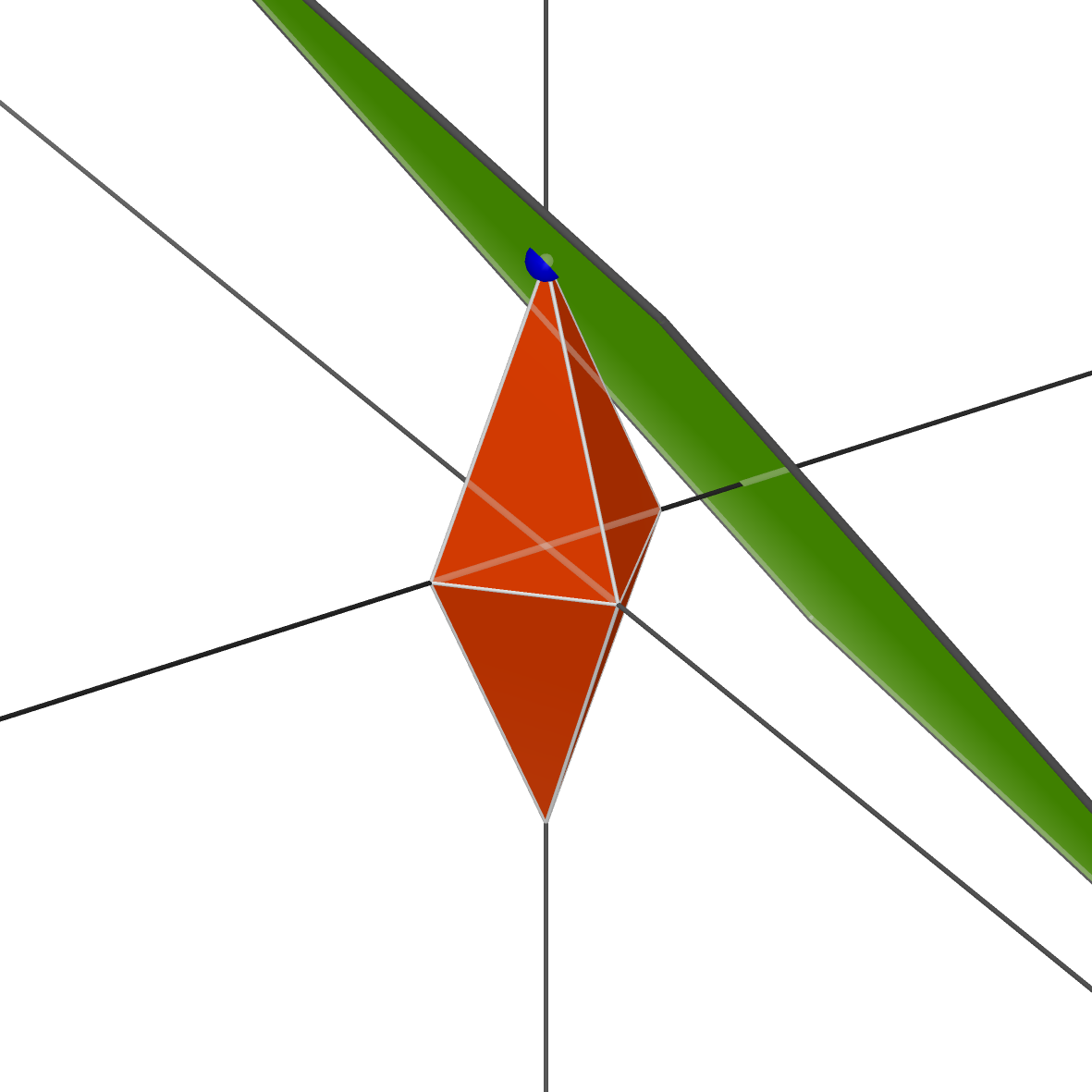}
                \label{fig:polytope2}
        \end{subfigure}
        \caption{The blue points are $\mathbf{x'}$, the polytopes are the $\ell_1$ and $\ell_1^{\mathbf w}$ balls and the green planes are the $\ker (\mathbf{A})+\mathbf{x'}$.}\label{fig:conos}
\end{centering}
\end{figure}

\begin{definition}{(\textbf{$\mathbf{w}$-weighted $\ell_1$-norm})}
For a fixed vector $\mathbf{w} \in \RR_{> 0}^d$, the $\ell_1^{\mathbf w}$-norm is given by
$$\|\mathbf{x}\|^\mathbf{w}_1 = \sum\limits_{i=1}^d w_i|x_i|.$$
\end{definition}

Note that by modifying the weights we are able to deform the descent cones. Consider the optimization problem 
\begin{equation}
\tag{P$_{\mathbf w}$}
\label{problem:w}
\arraycolsep=2pt\def\arraystretch{1.5}
\begin{array}{lr}
\Delta_\mathbf{w}(\mathbf{y}_0) = \argmin\limits_{\mathbf{x}\in \RR^d}  \|\mathbf{x}\|_1^\mathbf{w} &\text{ s.t. }\mathbf{Ax} = \mathbf{y}_0.
\end{array}
\end{equation}

We say that the problem \eqref{problem:w} is \textit{successful} or performed a \textit{perfect recovery} for $\mathbf{A}$ and $\mathbf{x}_0$ if it has a unique solution and this solution is $\mathbf{x}_0$, where $\mathbf{y}_0 := \mathbf{Ax}_0$. 

\begin{definition}
Let $\mathbf{X}_0 \sim \mathcal{F}$ be a random vector in $\RR^d$ and $\mathbf{A} \in \RR^{m \times d}$ a random matrix with i.i.d. entries N$(0,1)$ independent of $\mathbf{X}_0$. For a given vector $\mathbf{w} \in \RR_{> 0}^d$ we define the success probability as
    $$s(\mathbf{w})=\PP_\mathbf{A,X_0} \{\text{\eqref{problem:w} is successful for }\mathbf{A}\text{ and }\mathbf{X}_0\}.$$
\end{definition}

Let  $[d] := \{1,2,\dotsc, d\}$. If $\mathbf{x} \in \RR^d$, its support is $\supp(\mathbf{x}):= \{i \in [d] : x_i \neq 0\}$. If $B$ is the $\ell_1^{\mathbf w}$ ball of radius $|\!|\mathbf x|\!|_1^{\mathbf w}$, the support of $\mathbf{x}$ determines which face of $B$ the vector $\mathbf x$ lies in. It follows that all vectors with a fixed support $I \subseteq [d]$ have the same descent cone $D(\|\cdot\|_1^\mathbf{w} , \mathbf{x})$. 
We will thus adopt the notation $D(I, \mathbf{w}) := D(\|\cdot\|_1^\mathbf{w}, \mathbf{x})$ where $\mathbf{x}$ has support $I$. In this way we focus on the distribution induced by $\mathcal{F}$ over the subsets $I$ of $[d]$. 

\begin{definition}{(\textbf{Expected intrinsic volumes})}
For a fixed vector $\mathbf{w} \in \RR_{> 0}^d$ and $\mathbf{X}_0 \sim \mathcal{F}$ a random vector, we consider the random descent cone $D\left(\supp (\mathbf{X}_0), \mathbf{w}\right)$ and define the $k$th expected intrinsic volume as
\[ \bar{\nu}_k(\mathbf{w}) = \EE_{\mathbf{X}_0}\left[ \nu_k(D\left(\supp (\mathbf{X}_0), \mathbf{w}\right))\right],\]
for $k = 0, \dotsc ,d$.
\end{definition}

We define $\bar{t}_k$ and $\bar{h}_k$ as the tail and the half-tail of the expected intrinsic volumes, just as in Definition \ref{def:tails}. It is easy to prove by conditioning that the failure probability is given by 
\[1 - s(\mathbf{w}) = 2\bar{h}_{m+1}(\mathbf{w}).\]

Since even in the deterministic case this probability is very hard to compute, in the next sections we will use our Monte Carlo algorithm to estimate it. We now concentrate on the behavior of the phase transition in this setting. We begin by stating a phase-transition Theorem analogous to~\ref{teo:mainAme} in this setting. 
\begin{theorem}\label{theorem:concentratedDimensions}
Fix a tolerance $\eta \in (0,1)$. Let $\mathbf{w} \in \RR_{> 0}^d$ be a fixed vector of weights, $\mathbf{X}_0 \sim \mathcal{F}$ a random vector, and $\mathbf{A} \in \RR^{m \times d}$ a random matrix with i.i.d. entries N$(0,1)$ independent from $\mathbf{X}_0$. Then
$$\begin{array}{lcl}
\PP_{\mathbf{X}_0} \bigl\{ m \leq \delta(D(\supp (\mathbf{X}_0),\mathbf{w})) - a_{\eta/2} \sqrt{d}\bigr\} \geq 1 - \eta/2 & \Longrightarrow & s(\mathbf{w}) \leq \eta,\\
\PP_{\mathbf{X}_0} \bigl\{ m \geq \delta(D(\supp (\mathbf{X}_0),\mathbf{w})) + a_{\eta/2} \sqrt{d}\bigr\} \geq 1 - \eta/2 & \Longrightarrow & s(\mathbf{w}) \geq 1 - \eta,  
\end{array}$$
where $a_{\eta/2} := \sqrt{8\log(8/\eta)}.$
\end{theorem}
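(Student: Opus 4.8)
The plan is to reduce the statement to the deterministic phase-transition result of Theorem~\ref{teo:mainAme} by conditioning on the random signal $\mathbf{X}_0$ and then averaging over it. The key structural observation is that $\mathbf{A}$ is independent of $\mathbf{X}_0$, so once we fix a realization $\mathbf{x}_0$ of $\mathbf{X}_0$ the only remaining randomness in the success event comes from $\mathbf{A}$; moreover, as already noted in this section, the relevant descent cone depends on $\mathbf{x}_0$ only through its support, i.e. $D(\|\cdot\|_1^{\mathbf{w}}, \mathbf{x}_0) = D(\supp(\mathbf{x}_0), \mathbf{w})$. Setting $p(\mathbf{x}_0) := \PP_{\mathbf{A}}\{\text{\eqref{problem:w} is successful for }\mathbf{A}\text{ and }\mathbf{x}_0\}$, the tower property gives $s(\mathbf{w}) = \EE_{\mathbf{X}_0}[p(\mathbf{X}_0)]$, where $0 \leq p(\mathbf{X}_0) \leq 1$ pointwise.

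For the first implication I would apply Theorem~\ref{teo:mainAme} to the convex function $f = \|\cdot\|_1^{\mathbf{w}}$ with tolerance $\eta/2$; the constants then match the statement, since $a_{\eta/2} = \sqrt{8\log(4/(\eta/2))} = \sqrt{8\log(8/\eta)}$. Let $E := \{m \leq \delta(D(\supp(\mathbf{X}_0), \mathbf{w})) - a_{\eta/2}\sqrt{d}\}$, an event measurable with respect to $\mathbf{X}_0$ alone. On $E$, applying the deterministic theorem to the fixed realization yields $p(\mathbf{X}_0) \leq \eta/2$, whereas off $E$ we use only the trivial bound $p(\mathbf{X}_0) \leq 1$. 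Splitting the expectation and invoking the hypothesis $\PP_{\mathbf{X}_0}(E) \geq 1 - \eta/2$ gives
\[ s(\mathbf{w}) = \EE[p(\mathbf{X}_0)\mathbf{1}_E] + \EE[p(\mathbf{X}_0)\mathbf{1}_{E^c}] \leq \tfrac{\eta}{2}\,\PP_{\mathbf{X}_0}(E) + \PP_{\mathbf{X}_0}(E^c) \leq \tfrac{\eta}{2} + \tfrac{\eta}{2} = \eta. \]

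The second implication follows symmetrically by working with the complementary quantity $1 - s(\mathbf{w}) = \EE_{\mathbf{X}_0}[1 - p(\mathbf{X}_0)]$. Letting $F := \{m \geq \delta(D(\supp(\mathbf{X}_0), \mathbf{w})) + a_{\eta/2}\sqrt{d}\}$, Theorem~\ref{teo:mainAme} gives $1 - p(\mathbf{X}_0) \leq \eta/2$ on $F$ and $1 - p(\mathbf{X}_0) \leq 1$ off $F$; with $\PP_{\mathbf{X}_0}(F) \geq 1 - \eta/2$ the same split yields $1 - s(\mathbf{w}) \leq \eta$, i.e. $s(\mathbf{w}) \geq 1 - \eta$. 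In both cases the argument is a union bound that allocates error $\eta/2$ to the conditional failure probability supplied by Theorem~\ref{teo:mainAme} and error $\eta/2$ to the atypical realizations of $\mathbf{X}_0$ on which the statistical dimension is not separated from $m$ by the margin $a_{\eta/2}\sqrt{d}$. I do not expect a serious analytic difficulty; the only points needing care are the measurability bookkeeping that justifies the conditioning (guaranteed by the independence of $\mathbf{A}$ and $\mathbf{X}_0$) and the verification that the success event and the cone depend on the realization only through $\supp(\mathbf{X}_0)$, so that $E$ and $F$ are $\sigma(\mathbf{X}_0)$-measurable and Theorem~\ref{teo:mainAme} can be invoked realization by realization.
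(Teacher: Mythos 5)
Your proposal is correct and follows essentially the same route as the paper's proof: conditioning on the signal (equivalently, on its support, since the descent cone depends only on $\supp(\mathbf{X}_0)$), applying Theorem~\ref{teo:mainAme} with tolerance $\eta/2$ on the favorable event, and using the trivial bound together with the hypothesis to absorb the remaining $\eta/2$. The only cosmetic difference is that the paper phrases the decomposition as a sum over supports $I \in \Gamma$ versus $\Gamma^c$ with weights $q_I$, while you phrase it as an expectation split over the $\sigma(\mathbf{X}_0)$-measurable event $E$; you also write out the second implication and the constant check $a_{\eta/2} = \sqrt{8\log(8/\eta)}$ explicitly, which the paper leaves as ``analogous.''
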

\begin{proof}
We begin with the first implication. For a fixed support $I \subseteq [d]$ we define the conditional probability 
$$s(I,\mathbf{w}) := \PP_\mathbf{A} \{\text{\eqref{problem:w} is successful for }\mathbf{A}\text{ and }\mathbf{x}\text{, where $\mathbf x$ has support }I\}.$$
Also, let $q_I := \PP_{\mathbf{X}_0}\{\supp(\mathbf{X}_0) = I\}$.
Fix an $m$, and let $\Gamma$ be the collection of supports containing all the subsets $I$ that satisfy 
\begin{equation}
\label{cond:TheTheorem} 
m \leq \delta(D(I,\mathbf{w})) - a_{\eta/2} \sqrt{d}.
\end{equation} 
By conditioning on the support we obtain
\[s(\mathbf{w}) = \displaystyle\sum\limits_{I\in\Gamma} q_I \, s(\mathbf{w},I) + \sum\limits_{I\in\Gamma^c} q_I \, s(\mathbf{w},I).\]
Now, by definition, any element of $\Gamma$ satisfies \eqref{cond:TheTheorem}, thus we may apply Theorem \ref{teo:mainAme} to bound $s(\mathbf{w},I)$ in the first sum with $\eta/2$ and in the second one with $1$. Then,
\begin{align*}
s(\mathbf{w}) & \leq \displaystyle\sum_{I\in\Gamma} q_I \, \dfrac{\eta}{2} + \sum_{I \in \Gamma^c} q_I\\
& \leq  \displaystyle\dfrac{\eta}{2} + \dfrac{\eta}{2} \\
& =  \eta.
\end{align*}
The last inequality follows from the hypothesis on $\Gamma$; namely, $\PP_{\mathbf{X}_0}\{\supp(\mathbf{X}_0)\in\Gamma^c\}\leq \eta/2.$ 
An analogous argument proves the second implication. 
\end{proof}

The above theorem implies that the more the distribution of the random variable $\delta(D(\supp (\mathbf{X}_0),\mathbf{w}))$ is concentrated around its mean, the sharper the phase transition will be. Hence, we introduce the following definition.

\begin{definition}{(\textbf{Expected statistical dimension})}
For a fixed vector $\mathbf{w} \in \RR_{> 0}^d$ and a $\mathbf{X}_0 \sim \mathcal{F}$ a random vector, the expected statistical dimension is given by 
\begin{equation}\label{def:expectedStatisticalDim}
\overline{\delta}(\mathbf{w}):=\EE_{\mathbf{X}_0}\left[\delta(D(\supp (\mathbf{X}_0),\mathbf{w}))\right].
\end{equation}
\end{definition}

Now, we propose to choose the weights $\mathbf{w}$ so as to minimize the expected statistical dimension. Theorem \ref{teo:ProtoOptimal} below provides an analytically tractable upper bound for $\overline{\delta}(\mathbf{w})$ and a description of the weights resulting of minimizing this bound. We begin with a lemma that allows us to bound the statistical dimension of the cones $D(I,\mathbf{w})$.

\begin{lemma}\label{lemma:bound} For any $I\subseteq [d]$ and $\mathbf{w} \in \RR_{> 0}^d$, the statistical dimension of the descent cone $D(I,\mathbf{w})$ satisfies
\[ \delta(D(I,\mathbf{w}))\leq \inf_{\tau\geq 0} \left( |I|+\tau^2\left(\sum_{i\in I} w_i^2\right) +\sum_{i\not\in I} \frac{2}{\sqrt{2\pi}} \int_{\tau w_i}^\infty (u-\tau w_i)^2e^{-\frac{u^2}{2}}du\right),\]
where the infimum is achieved at a unique $\tau>0$.
\end{lemma}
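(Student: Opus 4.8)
The plan is to invoke the standard ``subdifferential recipe'' for bounding the statistical dimension of a descent cone (due to \cite{Ame}) and then to evaluate the resulting Gaussian integral explicitly for $f = \|\cdot\|_1^{\mathbf w}$. Starting from the identity $\delta(C) = \EE_{\mathbf g}[\|\pi_C(\mathbf g)\|_2^2]$ proved in the lemma above, I would first apply Moreau's decomposition $\mathbf g = \pi_C(\mathbf g) + \pi_{C^\circ}(\mathbf g)$, an orthogonal splitting in which $C^\circ$ is the polar cone. This gives $\|\pi_C(\mathbf g)\|_2^2 = \mathrm{dist}^2(\mathbf g, C^\circ)$ and hence $\delta(C) = \EE_{\mathbf g}[\mathrm{dist}^2(\mathbf g, C^\circ)]$. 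For a descent cone $C = D(\|\cdot\|_1^{\mathbf w}, \mathbf x)$ at a point $\mathbf x$ that is not a minimizer of the norm (which holds since $\mathbf x \neq \mathbf 0$, i.e. $I \neq \emptyset$), a standard convex-analysis fact identifies the polar as the conic hull of the subdifferential, $C^\circ = \cone(\partial f(\mathbf x)) = \bigcup_{\tau \ge 0}\tau\,\partial f(\mathbf x)$. Since $\tau\,\partial f(\mathbf x) \subseteq C^\circ$ for every $\tau \ge 0$, we have $\mathrm{dist}(\mathbf g, C^\circ) \le \mathrm{dist}(\mathbf g, \tau\,\partial f(\mathbf x))$; taking expectations and then infimizing over $\tau$ yields the bound $\delta(D(I,\mathbf w)) \le \inf_{\tau \ge 0}\EE_{\mathbf g}[\mathrm{dist}^2(\mathbf g, \tau\,\partial f(\mathbf x))]$.

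Next I would specialize the subdifferential. For $f = \|\cdot\|_1^{\mathbf w}$ at $\mathbf x$ with $\supp(\mathbf x) = I$, the set $\partial f(\mathbf x)$ is the Cartesian product whose $i$-th factor is the single point $w_i\,\sign(x_i)$ when $i \in I$ and the interval $[-w_i, w_i]$ when $i \notin I$; scaling by $\tau$ scales every factor. Because the set is a product, the squared distance from $\mathbf g$ splits as a sum over coordinates, and so does its expectation. For $i \in I$ the nearest point has fixed $i$-th coordinate $\tau w_i \sign(x_i)$, contributing $\EE[(g_i - \tau w_i \sign(x_i))^2] = 1 + \tau^2 w_i^2$, which sums to $|I| + \tau^2\sum_{i\in I} w_i^2$. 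For $i \notin I$ the contribution is $\EE[\mathrm{dist}^2(g_i, [-\tau w_i, \tau w_i])] = \EE[(|g_i| - \tau w_i)_+^2]$, and by symmetry of the standard normal this equals $\tfrac{2}{\sqrt{2\pi}}\int_{\tau w_i}^\infty (u - \tau w_i)^2 e^{-u^2/2}\,du$. Adding the two families of terms reproduces exactly the claimed bound.

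For the uniqueness assertion I would show that the objective $J(\tau)$ inside the infimum is strictly convex on $[0,\infty)$ with an interior minimizer. Differentiating under the integral sign — the boundary contributions vanish because both the integrand $(u - \tau w_i)^2$ and its $\tau$-derivative vanish at the lower limit $u = \tau w_i$ — gives $J''(\tau) = 2\sum_{i\in I} w_i^2 + \tfrac{4}{\sqrt{2\pi}}\sum_{i\notin I} w_i^2 \int_{\tau w_i}^\infty e^{-u^2/2}\,du$, which is strictly positive, so $J$ is strictly convex. Moreover $J'(0) = -\tfrac{4}{\sqrt{2\pi}}\sum_{i\notin I} w_i < 0$ (using $\int_0^\infty u\,e^{-u^2/2}\,du = 1$), while $J(\tau) \to \infty$ as $\tau \to \infty$ because the term $\tau^2\sum_{i\in I} w_i^2$ dominates. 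A strictly convex function with negative slope at $0$ that diverges at infinity attains its minimum at a unique $\tau > 0$, as claimed. (Here one uses $\emptyset \neq I \subsetneq [d]$, the only relevant case; the degenerate cases $I = \emptyset$ and $I = [d]$ push the minimizer to $\tau = \infty$ and $\tau = 0$ respectively.)

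The conceptual crux is the first step: recognizing that the polar of the descent cone is the cone over the subdifferential is what converts an opaque geometric quantity into a one-parameter Gaussian integral. Since that step is a cited, essentially known recipe, the remaining work is computational, and the part that genuinely requires care is the convexity/uniqueness argument — specifically the justification of differentiating under the integral sign and the coordinate-wise decomposition of the distance to the product set $\tau\,\partial f(\mathbf x)$. Both are routine once the product structure is made explicit, so I expect no serious obstacle, only bookkeeping.
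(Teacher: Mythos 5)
Your proposal is correct, and its computational core coincides with the paper's: both proofs reduce the problem to the subdifferential recipe $\delta(D(f,\mathbf{x}))\leq \inf_{\tau\geq 0}\EE_{\mathbf{g}}\bigl[\mathrm{dist}^2(\mathbf{g},\tau\,\partial f(\mathbf{x}))\bigr]$, identify $\tau\,\partial\|\mathbf{x}\|_1^{\mathbf w}$ as the product set with coordinates $\{\tau w_i\,\sign(x_i)\}$ for $i\in I$ and $[-\tau w_i,\tau w_i]$ for $i\notin I$, and evaluate the resulting Gaussian expectation coordinate by coordinate to get exactly the displayed bound. The difference is self-containedness. The paper cites Proposition~4.1 of \cite{Ame} as a black box for \emph{both} the recipe and the assertion that the infimum is attained at a unique $\tau>0$; its written proof is only the subdifferential computation plus the coordinate-wise distance identity. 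You instead re-derive the recipe from Moreau's decomposition together with the inclusion $\tau\,\partial f(\mathbf{x})\subseteq C^\circ$ (note that this inclusion is all you actually use, which is good, since the equality $C^\circ=\cone(\partial f(\mathbf{x}))$ strictly speaking requires a closure), and you prove attainment and uniqueness directly, by checking that the objective $J$ is strictly convex, has $J'(0)=-\tfrac{4}{\sqrt{2\pi}}\sum_{i\notin I}w_i<0$, and diverges as $\tau\to\infty$; your Leibniz-rule bookkeeping (vanishing boundary terms, the formula for $J''$) is correct. This buys two things the paper's citation hides: an argument independent of the external proposition, and an explicit identification of the degenerate cases --- for $I=\emptyset$ the infimum is not attained and for $I=[d]$ the minimizer sits at $\tau=0$, so the ``unique $\tau>0$'' clause of the lemma is accurate only for $\emptyset\neq I\subsetneq[d]$, exactly as your closing parenthetical observes.
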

\begin{proof}
This is a special case of Proposition $4.1$ in \cite{Ame}. Recall that the subdifferential of a convex function $f: \RR^d \rightarrow \RR$ at a point $\mathbf{x}$ is defined as
\[\partial f(\mathbf{x}) := \{\mathbf{z} \in \RR^d : f(\mathbf{y}) \geq f(\mathbf{x}) + \langle \mathbf{z}, \mathbf{y} - \mathbf{x}\rangle \text{ for all }\mathbf{y} \in \RR^d\}.\]
If $\mathbf{x}\in \RR^d$ is such that $x_i >0$ if $i\in I$ and $x_i = 0$ otherwise, then for $\tau\geq 0$ the $\tau$ scaling of the subdifferential of $|\!|\cdot |\!|_1^{\mathbf w}$ at $\mathbf{x}$ is given by
\[\tau \partial |\!|\mathbf{x}|\!|_1^\mathbf{w}:=\left\{(z_1,\dots, z_d): 
\begin{cases}
z_i=\tau w_i &\text{if $i\in I$,}\\
|z_i|\leq \tau w_i &\text{if $i\not\in I$.}
\end{cases}
\right\}.
\]
Proposition $4.1$ in \cite{Ame} states that $\delta(D(I,\mathbf{w}))\leq \inf_{\tau\geq 0}\EE_{\mathbf{g}}\left[{\rm dist}^2(\mathbf{g},\tau\partial |\!|\mathbf{x}|\!|_1^{\mathbf w})\right]$, where $\mathbf{g}\in\RR^d$ is a standard normal vector, and that the infimum is achieved at a unique $\tau>0$. Since,
\begin{equation}\label{dist}
{\rm dist}^2(\mathbf{g},\tau\partial |\!|\mathbf{x}|\!|_1^{\mathbf w})=\sum_{i\in I} (g_i-\tau w_i)^2 +\sum_{i\not\in I} \left( (|g_i|-\tau w_i)^+\right)^2,
\end{equation}
where $(z)^+ = z$ if $z > 0$ and $(z)^+ = 0$ otherwise, taking expected value the statement follows.
\end{proof}

Recall that $q_I = \PP_{\mathbf{X}_0}\{\supp(\mathbf{X}_0) = I\}$ for $I\subseteq [d]$. If $i\in [d]$, let $\beta_i:=\sum_{I\ni i} q_I$. We assume that $0<\beta_i<1$ for all $i$.

\begin{theorem}\label{teo:ProtoOptimal} For any $\mathbf{w}\in \RR_{> 0}^d$ and any $\tau>0$ the following inequality holds
\begin{multline} 
\label{eq:firstIneq}
\overline{\delta}(\mathbf{w})\leq \EE_{\mathbf{X}_0}\left[|\supp(\mathbf{X}_0)|\right]+ \\ 
\sum_{j=1}^d \left(\beta_j(\tau w_j)^2+(1-\beta_j)\left[ \sqrt{\frac{2}{\pi}}\int_{\tau w_j}^\infty (u-\tau w_j)^2e^{-\frac{u^2}{2}}du \right]\right).
\end{multline}
The right hand side is minimized if $\lambda_i:=\tau w_i$ satisfy the equation
\begin{equation}\label{eq:formulaWeights}
  \lambda_i\, \frac{\beta_i}{ (1-\beta_i)}= \sqrt{\frac{2}{\pi}} \, \int_{\lambda_i}^{\infty} (u-\lambda_i)e^{-\frac{u^2}{2}}du.
\end{equation}  
\end{theorem}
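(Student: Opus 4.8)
The plan is to obtain the inequality \eqref{eq:firstIneq} directly from Lemma~\ref{lemma:bound} using linearity of expectation, and then to minimize the resulting bound term by term after the reparametrization $\lambda_j:=\tau w_j$. First I would fix $\tau>0$ and apply Lemma~\ref{lemma:bound} with $I=\supp(\mathbf{X}_0)$, using that the infimum defining the bound there is at most its value at this single $\tau$. This bounds $\delta(D(\supp(\mathbf{X}_0),\mathbf{w}))$ pointwise in $\mathbf{X}_0$ by
\[
|\supp(\mathbf{X}_0)| + \tau^2\sum_{i\in\supp(\mathbf{X}_0)} w_i^2 + \sum_{i\notin\supp(\mathbf{X}_0)}\sqrt{\tfrac{2}{\pi}}\int_{\tau w_i}^\infty (u-\tau w_i)^2 e^{-u^2/2}\,du .
\]

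Next I would take expectations. Rewriting each sum over $i$ with an indicator, $\sum_{i\in\supp(\mathbf{X}_0)}=\sum_{i=1}^d \mathbf{1}[i\in\supp(\mathbf{X}_0)]$ and similarly for the complementary sum, the expectation of the indicators produces exactly $\PP\{i\in\supp(\mathbf{X}_0)\}=\beta_i$ and $\PP\{i\notin\supp(\mathbf{X}_0)\}=1-\beta_i$, by the definition $\beta_i=\sum_{I\ni i}q_I$. Collecting terms yields the right-hand side of \eqref{eq:firstIneq}, with the weight-independent constant $\EE_{\mathbf{X}_0}[|\supp(\mathbf{X}_0)|]$ out front. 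This part is routine bookkeeping once linearity of expectation is applied.

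For the minimization, I would observe that the bound depends on $(\mathbf{w},\tau)$ only through the products $\lambda_j:=\tau w_j$, which range freely over $\RR_{>0}^d$ (take $\tau=1$ and $w_j=\lambda_j$). Hence minimizing over all admissible $(\mathbf{w},\tau)$ is the same as minimizing the decoupled sum $\sum_{j=1}^d g_j(\lambda_j)$ one coordinate at a time, where $g_j(\lambda)=\beta_j\lambda^2+(1-\beta_j)\sqrt{2/\pi}\int_\lambda^\infty (u-\lambda)^2 e^{-u^2/2}\,du$. Differentiating $g_j$ via the Leibniz rule — the boundary term vanishes because the integrand $(u-\lambda)^2$ is zero at $u=\lambda$, leaving $\frac{d}{d\lambda}\int_\lambda^\infty (u-\lambda)^2 e^{-u^2/2}\,du=-2\int_\lambda^\infty (u-\lambda)e^{-u^2/2}\,du$ — gives $g_j'(\lambda)=2\beta_j\lambda-2(1-\beta_j)\sqrt{2/\pi}\int_\lambda^\infty (u-\lambda)e^{-u^2/2}\,du$, and setting $g_j'(\lambda)=0$ is precisely equation \eqref{eq:formulaWeights}.

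Finally I would confirm this stationary point is the global minimizer. A second differentiation (again the boundary term vanishes) gives $g_j''(\lambda)=2\beta_j+2(1-\beta_j)\sqrt{2/\pi}\int_\lambda^\infty e^{-u^2/2}\,du>0$ for all $\lambda\geq 0$, so each $g_j$ is strictly convex; combined with $g_j'(0)=-2(1-\beta_j)\sqrt{2/\pi}<0$ and $g_j(\lambda)\to\infty$ as $\lambda\to\infty$, this forces the unique stationary point to lie in $(0,\infty)$ and to be the global minimum, matching the uniqueness claim. The one step that needs care is the differentiation under the integral sign — correctly handling that $\lambda$ appears both in the lower limit and inside the integrand — together with the observation that the reparametrization decouples the optimization; the remaining computations are straightforward.
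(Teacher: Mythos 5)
Your proposal is correct, and its first half is essentially the paper's argument: both apply Lemma~\ref{lemma:bound} with a single common value of $\tau$ and then average over $\supp(\mathbf{X}_0)$ (your indicator bookkeeping is the same computation as the paper's conditioning on $I$ followed by exchanging the order of summation), yielding \eqref{eq:firstIneq}. Where you genuinely diverge is in the minimization step. The paper rewrites the right-hand side $h(\boldsymbol\lambda)$, via the distance formula \eqref{dist}, as the expectation $\EE_{\mathbf{g},\mathbf{X}_0}\bigl[\sum_{i\in\supp(\mathbf{X}_0)}(g_i-\lambda_i)^2+\sum_{i\notin\supp(\mathbf{X}_0)}((|g_i|-\lambda_i)^+)^2\bigr]$, notes that the integrand is convex in $\boldsymbol\lambda$, and concludes that $h$ is convex, so any zero of $\nabla h$ is a global minimizer; equation \eqref{eq:formulaWeights} is then read off as the stationarity condition. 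You instead decouple the bound into one-dimensional functions $g_j(\lambda_j)$ and do direct calculus: your Leibniz differentiation giving \eqref{eq:formulaWeights} is the computation the paper leaves implicit, but your convexity certificate is the explicit second derivative $g_j''(\lambda)=2\beta_j+2(1-\beta_j)\sqrt{2/\pi}\int_\lambda^\infty e^{-u^2/2}\,du>0$, supplemented by $g_j'(0)<0$ and coercivity. Your route is more elementary and proves slightly more than the theorem asserts, namely existence and uniqueness of the minimizing $\boldsymbol\lambda\in\RR_{>0}^d$ (this uses the standing assumption $0<\beta_j<1$); you also make explicit the decoupling observation that $(\mathbf{w},\tau)$ enters only through $\lambda_j=\tau w_j$, which the paper glosses over. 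The paper's route is shorter and keeps the conceptual link to the subdifferential-distance representation from which the bound arose, obtaining convexity with no second-derivative computation. Both arguments are sound.
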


\begin{proof} Conditioning on $\supp(\mathbf{X}_0)$ we have that
\[ \EE_{\mathbf{X}_0}[ \delta (D(\supp(\mathbf{X}_0),\mathbf{w}))]=\sum_{I\subseteq [d]} \delta(D(I,\mathbf{w}))\, q_I.\]
By Lemma \ref{lemma:bound} we know that the right hand side is bounded for any choice of $\tau_I>0$ by
\[ \sum_{I\subseteq [d]} q_I \left( |I|+\tau_I^2\left(\sum_{i\in I} w_i^2\right) +\sum_{i\not\in I} \frac{2}{\sqrt{2\pi}} \int_{\tau_i w_i}^\infty (u-\tau_I w_i)^2e^{-\frac{u^2}{2}}du\right).\]
In particular, the inequality holds when all $\tau_I$ coincide with a given value $\tau$. Changing the order of summations we conclude that $\EE_{\mathbf{X}_0}[ \delta (D(\supp(\mathbf{X}_0),\mathbf{w}))]$ is bounded above by
\[ \sum_I|I|\,q_I + \sum_{j=1}^d \left[\tau^2w_j^2\left( \sum_{I\ni j}q_I\right) +\left(\sum_{I\not\ni j} q_I\right)\left( \sqrt{\frac{2}{\pi}} \int_{\tau w_j}^\infty (u-\tau w_j)^2e^{-\frac{u^2}{2}}du\right) \right],\]
which proves the first claimed inequality. Now, writing the right-hand side of \eqref{eq:firstIneq} as a function of $\boldsymbol\lambda$ and using \eqref{dist} we obtain
\begin{align*}
h(\boldsymbol\lambda):&=\EE_{\mathbf{X}_0}\left[|\supp(\mathbf{X}_0)|\right]+ \sum_{j=1}^d \left(\beta_j\lambda_j^2+(1-\beta_j)\left[ \sqrt{\frac{2}{\pi}}\int_{\lambda_j}^\infty (u-\lambda_j)^2e^{-\frac{u^2}{2}}du \right]\right)\\
&=\EE_\mathbf{g,X_0}\left [\sum_{i\in \supp(\mathbf{X}_0)} (g_i-\lambda_i)^2 + \sum_{i\not\in \supp(\mathbf{X}_0)} ((|g_i|-\lambda_i)^+)^2\right],
\end{align*}
where $\mathbf{g}$ is a normally distributed random vector in $\RR^d$. Since the function inside the expectation is convex in $\boldsymbol\lambda$, $h(\boldsymbol\lambda)$ is also convex. It follows that $h(\boldsymbol\lambda)$ is minimized at any point with $\nabla h(\boldsymbol\lambda)=0$. The equation $\frac{\partial h}{\partial \lambda_i}=0$ is equivalent to
\begin{equation*}
  \lambda_i\, \frac{\beta_i}{ (1-\beta_i)}= \sqrt{\frac{2}{\pi}} \, \int_{\lambda_i}^{\infty} (u-\lambda_i)e^{-\frac{u^2}{2}}du,
\end{equation*}  
and thus \eqref{eq:formulaWeights} holds.
\end{proof}

The paper \cite{misra2015weighted} independently found a very similar result. In this paper, Misra and Parrilo consider a Bayesian setting where the entries are independent and the probability of being non-zero is given by a continuous function, i.e., $\beta_i = p(i/d)$. If one takes a discrete measure to integrate, then the two methods find the same optimal weights. The useful feature about our formulation is the fact that equation \eqref{eq:formulaWeights} allows us to use a simple binary search algorithm to efficiently find these weights. 

\section{Estimating intrinsic volumes for weighted crosspolytopes}
\label{section: Formulas}

In order to estimate the quality of the bound in Theorem \ref{teo:ProtoOptimal} we need to calculate the actual expected statistical dimension. A method to do so is to approximate the corresponding intrinsic volumes, and for that we need to understand the projections onto descent cones $D(I, \mathbf{w})$ arising from weighted norms. This is the main objective of this section. We begin by deriving formulas for $\pi_C$ when $C$ is the descent cone generated by a weighted crosspolytope. These formulas will allow us to determine the dimension of the unique face of $C$ whose relative interior contains $\pi_C(\zz)$ for any $\zz$, and in particular, to propose an efficient Monte Carlo method to estimate the intrinsic volumes of $C$.

Let $\e_0,\dots,\e_d$ be the canonical basis of $\RR^{d+1}$ and fix positive weights $w_1,\dots, w_d$. Let $S:\RR^{d+1}\rightarrow \RR^{d+1}$ be the map $S(x_0,x_1,\dots,x_d)=(x_0,w_1x_1,\dots, w_dx_d)$. If $\sigma: \RR^{d+1}\rightarrow \RR^{d+1}$ acts by permuting the last $d$ components, define the weighted permutation $\sigma_w:=S^{-1}\circ \sigma \circ S$.

For a real number $a>0$ let $C={\rm cone}\{\e_0/a\pm \e_i/w_i:i=1,\dots d\}$. The cone $C$ is the cone over a $d$-dimensional crosspolytope. For $i=1,\dots, d$ define $\uu_i =\e_0/a + \e_i/w_i$ and $\vv_i = -a\e_0+\sum_{r=1}^i w_r\e_r -\sum_{r=i+1}^{d}w_r\e_r$. 

\begin{lemma} \label{lem: basic} The following statements hold:
\begin{enumerate}
\item $C$ is invariant under sign changes and weighted permutations $\sigma_w$ of the last $d$ components.
\item The dual cone $C^*:=\{\yy\in \RR^{d+1}:\forall \xx\in C(\yy^t\xx\leq 0)\}$ is given by
\[C^* = {\rm cone}\{-a\e_0\pm w_1\e_1 \pm w_2\e_2\pm \dots \pm w_d \e_d\}.\] 
In particular $C^*$ is combinatorially equivalent to a cone over the $d$-dimensional cube.
\item Up to weighted permutations and sign changes of the last $d$ components, every $k$-dimensional face of $C$ is of the form $F={\rm cone}\{\uu_1,\dots, \uu_k\}$, for $1 \leq k \leq d$. In particular, every proper face of $C$ is a simplicial cone.
\item The face $F^{\vee}:=\{\yy\in C^*: \forall \xx\in F(\yy^t\xx=0)\}$ dual to $F = {\rm cone}\{\uu_1,\dots, \uu_k\}$ is given by 
\[\textstyle F^{\vee}={\rm cone}\{-a\e_0+\sum_{i=1}^k w_i\e_i\pm w_{k+1}\e_{k+1}\dots\pm w_d\e_d\}.\]
\end{enumerate}
\end{lemma}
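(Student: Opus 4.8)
The plan is to reduce all four statements to the standard unweighted, unscaled crosspolytope cone by a single diagonal linear map, and then to read off the geometry from the classical combinatorics of the crosspolytope and the cube. Introduce the diagonal map $L$ defined by $L(\e_0)=\e_0/a$ and $L(\e_i)=\e_i/w_i$ for $i\geq 1$, so that $L$ carries the generators $\e_0\pm\e_i$ of the standard cone $C_{\mathrm{std}}:=\cone\{\e_0\pm\e_i:i=1,\dots,d\}$ precisely to the generators of $C$; hence $C=L(C_{\mathrm{std}})$. Because $L$ is an invertible linear map it induces an isomorphism of face lattices, and because it is diagonal it commutes with the coordinate sign changes and satisfies $\sigma_w=L\sigma L^{-1}$ (the maps $L$ and $S^{-1}$ agree on the last $d$ coordinates, on which alone $\sigma$ acts nontrivially). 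Statement (1) then follows immediately: $C_{\mathrm{std}}$ is manifestly invariant under sign changes and under permutations $\sigma$ of its last $d$ coordinates, and transporting this invariance through $L$ gives invariance of $C$ under sign changes and under $\sigma_w$.

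For (2) I would invoke the duality $(L\,C_{\mathrm{std}})^{*}=(L^{\mathsf t})^{-1}C_{\mathrm{std}}^{*}$; since $L$ is diagonal, $L^{\mathsf t}=L$ and $(L^{\mathsf t})^{-1}=\mathrm{diag}(a,w_1,\dots,w_d)$. A direct check on generators shows that $C_{\mathrm{std}}^{*}=\cone\{-\e_0\pm\e_1\pm\dots\pm\e_d\}$: the inequalities $\yy^{\mathsf t}(\e_0\pm\e_i)\leq 0$ force $y_0\leq 0$ and $|y_i|\leq -y_0$, realizing $C_{\mathrm{std}}^{*}$ as the cone over the box $\{y_0=-1,\ |y_i|\leq 1\}$, whose vertices are the claimed generators. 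Applying $\mathrm{diag}(a,w_1,\dots,w_d)$ scales the $i$-th coordinate by $w_i$ and the $0$-th by $a$, turning these vertices into $-a\e_0\pm w_1\e_1\pm\dots\pm w_d\e_d$; the linear isomorphism preserves the combinatorial type, which is a cone over a cube.

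For (3) I would observe that all generators of $C$ lie in the affine hyperplane $\{x_0=1/a\}$, so $C$ is the (pointed) cone over the polytope $P:=\convex\{\e_0/a\pm\e_i/w_i\}$, an affine image of the $d$-dimensional crosspolytope. Its proper faces are exactly the cones over the proper faces of $P$, which are the simplices spanned by vertex subsets containing no antipodal pair $\{\e_0/a+\e_i/w_i,\ \e_0/a-\e_i/w_i\}$; using the symmetries of (1), any such subset of size $k$ can be moved to $\{\uu_1,\dots,\uu_k\}$, and linear independence of these $k\leq d$ vectors makes each proper face simplicial. For (4) I would read $F^{\vee}$ straight off the description of $C^{*}$ from (2): writing a general element of $C^{*}$ as $(-at,y_1,\dots,y_d)$ with $t\geq 0$ and $|y_i|\leq t w_i$, the orthogonality conditions $\yy^{\mathsf t}\uu_i=-t+y_i/w_i=0$ for $i=1,\dots,k$ force $y_i=tw_i$ while leaving $|y_i|\leq tw_i$ free for $i>k$; normalizing $t=1$ exhibits $F^{\vee}$ as the cone with vertices $-a\e_0+\sum_{i=1}^{k} w_i\e_i\pm w_{k+1}\e_{k+1}\pm\dots\pm w_d\e_d$, as claimed.

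The main difficulty here is bookkeeping rather than depth. The one step that must be airtight is the dual-cone computation in (2), since (4) is built directly on it, and in (3) I must invoke the correct combinatorial description of the crosspolytope's face lattice (and check that $C$ is genuinely a cone over $P$ with apex off the hyperplane $\{x_0=1/a\}$). Passing everything through the single diagonal map $L$ is what keeps the four parts consistent and the arguments short.
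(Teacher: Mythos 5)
Your proof is correct, but it is organized along a genuinely different route than the paper's. The paper works entirely in the weighted coordinates: part (1) by noting that the symmetries permute the generators of $C$; part (2) by deriving the inequalities $|y_i/w_i|\leq -ay_0$ directly and observing that they are a rescaling of the inequalities cutting out the cone over a cube; part (3) not by citing crosspolytope combinatorics but by exhibiting an explicit dual certificate $\tau=-a\e_0+w_1\e_1+\dots+w_k\e_k\in C^*$ whose dot products with the generators of $C$ lie in $\{-2,-1,0\}$ and vanish exactly on $\uu_1,\dots,\uu_k$; and part (4) by the fact that the conjugate face is generated by the extreme rays of $C^*$ orthogonal to $F$. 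You instead push everything through the single diagonal isomorphism $L$, so that (1) and (2) become transport of standard facts (via $\sigma_w=L\sigma L^{-1}$ and $(LC_{\mathrm{std}})^*=(L^{\mathsf t})^{-1}C_{\mathrm{std}}^*$), and (3) is reduced to the classical face lattice of the crosspolytope combined with the correspondence between proper faces of a pointed cone over a polytope and faces of that polytope. What your approach buys is uniformity and brevity: one linear map plus textbook combinatorics handles all four parts, with no weighted bookkeeping. What the paper's approach buys is self-containedness and reusability: it never invokes the face lattice of the crosspolytope or the cone-over-polytope correspondence as black boxes, and the explicit dual generators it produces are precisely the vectors $\vv_i$ that are then used in the simplicial decomposition argument of Lemma~\ref{lem: key}. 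Two points you flagged as needing care do hold and are worth stating explicitly if you write this up: pointedness of $C$ (every generator has $x_0=1/a>0$, hence $C\cap\{x_0=0\}=\{\mathbf{0}\}$), which legitimizes the cone-over-polytope face correspondence in (3); and the duality rule $(AK)^*=(A^{\mathsf t})^{-1}K^*$ for invertible $A$, which is the engine of your (2).
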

\begin{proof} $(1)$ Changing the sign of any of the last $d$ components or applying any weighted permutation $\sigma_w$ only permutes the generators of $C$ and therefore fixes the cone. $(2)$ If $(y_0,\dots, y_d)\in C^*$ then $y_0/a\pm y_i/w_i\leq 0$ so $|y_i/w_i|\leq -ay_0$ for $i=1,\dots, d$. This is a rescaling of the usual inequalities defining the cone over a cube. As a result the extreme rays of the cone defined by these inequalities are given by rescaling the vertices of the cube, and are therefore of the form $-a\e_0\pm w_1\e_1 \pm w_2\e_2\pm \dots \pm w_d \e_d$ as claimed. $(3)$ A proper face of $C$ cannot contain two opposite rays $\e_0/a + \e_i/w_i$ and $\e_0/a - \e_i/w_i$ of $C$, as then it would contain the interior point $2\e_0/a$. Up to permutations and sign changes from $(1)$, every subset of the generators of $C$ not containing opposite rays is of the form $\{\uu_1,\dots, \uu_k\}$ for some $k=0,\dots, d$. Every such set is obviously linearly independent. We will show that $F$ is a face of $C$ by verifying that the element $\tau= -a\e_0+w_1\e_1+\dots +w_k\e_k$ is an element of $C^*$ which vanishes precisely at the claimed generators. This is because the dot products $\tau^t\left(\e_0/a\pm \e_i/w_i\right)=-1\pm \tau^t \e_i/w_i$ take values in $\{-2,-1,0\}$, and equal $0$ if and only if the sign is positive and $i\leq k$. $(4)$ The face $F^{\vee}$ is generated by the extreme rays of $C^*$ which have vanishing dot product with $\uu_1,\dots, \uu_k$. The claim is therefore immediate from the list of extreme rays of $C^*$ computed in part $(2)$.
\end{proof}

Given $\zz\in \RR^{d+1}$ we would like to find an expression $\zz = \cc+\cc'$ with $\cc$ in the relative interior of a face $F\subseteq C$ and $\cc'$ an element of the face $F^{\vee}\subseteq C^*$, as this certifies that $\pi_C(\zz)=\cc$ and gives us a formula for the projection. 
Note that unlike $F$, the face $F^{\vee}$ is generally {\it not} simplicial. However, as shown in the previous lemma, the faces of $C^{*}$ are cones over hypercubes, and every hypercube admits a natural decomposition into simplices: the hypercube $|x_i|\leq 1$ in $\RR^d$ decomposes into $d!$ simplices which are the images of $P=\{x: 1\geq x_1\geq x_2\geq \dots\geq x_d\geq 0\}$ under all permutations of the components. This decomposition is the geometric motivation for the following key lemma.

\begin{lemma}\label{lem: key} If $\mathbf{z}\in \RR^{d+1}$ then the following statements hold:
\begin{enumerate}
\item For any integer $m=1,\dots, d$ the vectors $\mathbf{u}_1,\dots, \mathbf{u}_m, \mathbf{v}_m,\dots \mathbf{v}_d$ are a basis for $\RR^{d+1}$. 
\item Assume $z_1/w_1\geq \dots\geq z_d/w_d\geq 0$ and define $b_1\leq b_2\leq \dots \leq b_d$ by the formula
\[b_j:=\begin{cases}
\sum_{i=1}^j w_iz_i -\frac{\left(a^2+\sum_{i=1}^jw_i^2\right)}{w_{j+1}}z_{j+1} &\text{if $1\leq j\leq d-1$,}\\
\sum_{i=1}^d w_iz_i &\text{if $j=d$.}
\end{cases}
\]
If $l$ is the unique integer for which the inequalities $b_{l-1}<az_0\leq b_l$ hold (here by convention we set $b_{-1}=-\infty$ and $b_{d+1}=\infty$) then letting $t:=\frac{-az_0+\sum_{i=1}^l w_iz_i}{a^2+\sum_{i=1}^l w_i^2}$ and $\alpha_i:=w_iz_i-w_i^2t$ we have 
\begin{enumerate}
\item The projection of $\zz$ towards $C$ lands in the relative interior of the $l$-dimensional face $F_l = {\rm cone}\{\uu_1,\dots, \uu_l\}$, and is given by the formula
\[\pi_C(\zz)=\sum_{i=1}^l \alpha_i\uu_i\]

\item The following equality holds
\[\|\pi_C(\zz)\|^2=(z_0+at)^2+\sum_{i=1}^l (z_i-w_it)^2.\]
\end{enumerate}
 
\end{enumerate}
\end{lemma}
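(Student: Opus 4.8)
The engine behind the whole statement is the variational characterization of the metric projection onto a polyhedral cone: for a closed convex cone $C$ with polar $C^{*}$, a point $\cc$ equals $\pi_C(\zz)$ precisely when $\zz=\cc+\cc'$ with $\cc\in C$, $\cc'\in C^{*}$ and $\langle \cc,\cc'\rangle=0$ (Moreau decomposition). Moreover, if $\cc$ lies in the relative interior of a face $F$ and $\cc'\in F^{\vee}$, then $\langle\cc,\cc'\rangle=0$ is automatic from the definition of $F^{\vee}$ in Lemma~\ref{lem: basic}. The plan is therefore to \emph{exhibit} such a decomposition with the face $F=F_l$ named in the statement, reading off $\cc=\sum_{i=1}^l\alpha_i\uu_i$ and setting $\cc'=\zz-\cc$, and then to check the two membership conditions $\cc\in\operatorname{relint}(F_l)$ and $\cc'\in F_l^{\vee}$. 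Everything else reduces to algebra, and part (2)(b) will fall out of the explicit coordinates of $\cc$.

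For part (1) I would argue linear independence directly. The vectors $\uu_1,\dots,\uu_m$ are independent because, restricted to coordinates $1,\dots,m$, their matrix is $\operatorname{diag}(1/w_1,\dots,1/w_m)$. For $\vv_m,\dots,\vv_d$, the consecutive differences satisfy $\vv_j-\vv_{j-1}=2w_j\e_j$, so their span contains $\e_{m+1},\dots,\e_d$; together with $\vv_m$, whose $\e_0$-coordinate $-a$ is nonzero while those of $\e_{m+1},\dots,\e_d$ vanish, this yields $d-m+1$ independent vectors. Finally, a vector in the intersection of the two spans must be supported on coordinates $0,\dots,m$; writing it simultaneously in both families and matching the $0$-coordinate forces a scalar multiple of $(a^2+\sum_{i=1}^m w_i^2)$ to vanish, hence the scalar is $0$ and the intersection is trivial. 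As the counts add to $d+1$, these vectors form a basis.

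For part (2)(a) I would first record that, with $\alpha_i:=w_iz_i-w_i^2t$, the residual $\cc'=\zz-\cc$ has coordinates $\cc'_0=z_0-\tfrac{1}{a}\sum_{i\le l}\alpha_i$, $\cc'_i=z_i-\alpha_i/w_i=w_it$ for $i\le l$, and $\cc'_i=z_i$ for $i>l$; the very definition of $t$ is exactly the condition making $\cc'_0=-at$, i.e. $\sum_{i\le l}\alpha_i=az_0+a^2t$. From the description of $F_l^{\vee}$ in Lemma~\ref{lem: basic}, $\cc'\in F_l^{\vee}$ holds precisely when $t\ge 0$ and $|z_i|\le w_it$ for $i>l$, which under the ordering hypothesis is the single inequality $z_{l+1}/w_{l+1}\le t$; similarly $\cc\in\operatorname{relint}(F_l)$ means $\alpha_i>0$ for all $i\le l$, i.e. $z_l/w_l>t$. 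The crux is then the purely algebraic translation of these two inequalities: clearing the positive denominator $a^2+\sum_{i\le l}w_i^2$ of $t$ converts $z_{l+1}/w_{l+1}\le t$ into $az_0\le b_l$ and $z_l/w_l>t$ into $az_0>b_{l-1}$. Existence and uniqueness of the selected $l$ then follow from $b_1\le\cdots\le b_d$, which I would verify from the identity $b_{j+1}-b_j=(a^2+\sum_{i=1}^{j+1}w_i^2)\bigl(z_{j+1}/w_{j+1}-z_{j+2}/w_{j+2}\bigr)\ge 0$, where the ordering hypothesis is used decisively.

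Part (2)(b) is then immediate: the coordinates computed above give $\cc_i=\alpha_i/w_i=z_i-w_it$ for $1\le i\le l$ and $\cc_0=\tfrac{1}{a}\sum_{i\le l}\alpha_i=z_0+at$, so $\|\pi_C(\zz)\|^2=\|\cc\|^2=(z_0+at)^2+\sum_{i=1}^l(z_i-w_it)^2$. I expect the main obstacle to be the bookkeeping in part (2)(a): keeping the two membership inequalities aligned with the correct breakpoints $b_{l-1},b_l$ and establishing the monotonicity of the $b_j$, since this is exactly where the hypothesis $z_1/w_1\ge\cdots\ge z_d/w_d\ge 0$ must be invoked and where sign and index errors are easy to make. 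The projection characterization and part (1) are comparatively routine.
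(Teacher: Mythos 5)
Your proof is correct, and it rests on the same certificate the paper uses --- exhibiting $\zz=\cc+\cc'$ with $\cc\in\operatorname{relint}(F_l)$ and $\cc'\in F_l^{\vee}$, which the paper sets up in the paragraph preceding Lemma~\ref{lem: key} --- but the verification is organized quite differently. The paper's part (1) is constructive: it expands an arbitrary $\zz$ in the mixed basis $\uu_1,\dots,\uu_m,\vv_m,\dots,\vv_d$ and solves for all coefficients $\alpha_i,\beta_i$ explicitly, and these formulas are then reused verbatim in part (2), where membership $\cc'\in F_l^{\vee}$ is certified by checking $\beta_m,\dots,\beta_d\geq 0$, i.e., by staying with the generator description of one simplicial piece of the cube-cone $F_l^{\vee}$ (this realizes the simplicial-decomposition motivation stated before the lemma). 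You instead prove (1) by an independence-plus-dimension-count argument that produces no formulas, and in (2) you never touch the $\vv_i$ at all: you compute the residual $\cc'=\zz-\cc$ directly in coordinates and test membership in $F_l^{\vee}$ via its inequality description ($t\geq 0$ and $|z_i|\leq w_i t$ for $i>l$). Both routes produce the identical breakpoint inequalities $b_{l-1}<az_0\leq b_l$ (your clearing-of-denominators computation matches the paper's conditions $\alpha_m>0$, $\beta_m\geq 0$) and the same monotonicity computation for the $b_j$. Your version buys a leaner part (2) with no $\beta$-bookkeeping; the cost is a step you left implicit: Lemma~\ref{lem: basic}(4) gives $F_l^{\vee}$ by generators only, so your inequality characterization of membership needs the short cube argument (factor out $t$ and write each ratio $z_i/(w_i t)\in[-1,1]$ as a convex combination of $\pm 1$), which is the same rescaling idea as in the proof of Lemma~\ref{lem: basic}(2). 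Also, at the boundary index $l=d$ your ``single inequality $z_{l+1}/w_{l+1}\leq t$'' should be read as just $t\geq 0$, since $z_{l+1}$ does not exist; the paper's proof has the same cosmetic edge case.
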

\begin{proof}$(1)$ For any integer $m$ with $1\leq m\leq d$ and any real numbers $\alpha_1,\dots, \alpha_m$, $\beta_m,\dots, \beta_d$ let $s(\alpha)=\sum_{i=1}^m \alpha_i$ and $t(\beta)=\sum_{i=m}^d\beta_i$. If $\zz=\sum_{i=1}^m \alpha_i \uu_i +\sum_{i=m}^d\beta_i \vv_i$,  then the following equalities hold
\[
z_i=\begin{cases}
s(\alpha)/a-at(\beta) &\text{if $i=0$,}\\
\alpha_i/w_i +w_it(\beta) &\text{if $1\leq i\leq m$,}\\
w_i(-\beta_{m}-\dots-\beta_{i-1}+\beta_{i}+\dots+\beta_d) &\text{if $m+1\leq i\leq d$.}
\end{cases}
\]
As a result $\alpha_i = w_iz_i-w_i^2t(\beta)$ for $i=1,\dots, m$. 
Replacing these expressions for $\alpha_i$ in the equation for $z_0$ we conclude that the equality 
\[ az_0 = \sum_{i=1}^m w_iz_i-t(\beta)\left(a^2+\sum_{i=1}^mw_i^2\right)\] 
holds, obtaining a formula for $t(\beta)$ in terms of the components of $\zz$:
\[t(\beta)=\frac{-az_0+\sum_{i=1}^m w_iz_i}{a^2+\sum_{i=1}^m w_i^2}.\]
Combining this formula with the expressions for $z_{m+1},\dots, z_d$ above, we find that the remaining coefficients $\beta_m,\dots,\beta_d$ satisfy 
\[2\beta_i=
\begin{cases}
t(\beta)-z_{m+1}/w_{m+1} &\text{if $i=m$,}\\
z_{i}/w_i-z_{i+1}/w_{i+1} &\text{if $m+1\leq i\leq d$.}
\end{cases}
\]
We conclude that the $d+1$ vectors $\uu_1,\dots, \uu_m,\vv_m,\dots, \vv_d$ generate all of $\RR^{d+1}$ and are therefore a basis.
$(2)$ The vector $\zz$ is a convex combination of a vector in the relative interior of the face $F_m$ and the vectors $\vv_m,\dots, \vv_d\in F^{\vee}$ if and only if $\alpha_1,\dots, \alpha_m>0$ and $\beta_m,\dots, \beta_d\geq 0$. 

Since $z_1/w_1\geq \dots \geq z_d/w_d\geq 0$, the formula for $\beta_i$ derived in the proof of part $(1)$ immediately implies that $\beta_{m+1},\dots,\beta_d\geq 0$. 

Moreover the formulas for $\beta_m$ and $t(\beta)$ in the proof of part $(1)$ show that $\beta_m\geq 0$ if and only if the following inequality holds 
\[ \frac{-az_0+\sum_{i=1}^m w_iz_i}{a^2+\sum_{i=1}^m w_i^2} \geq \frac{z_{m+1}}{w_{m+1}},\]
or equivalently, if $b_m\geq az_0$.

Finally, $\alpha_i$ is strictly positive for $i=1,\dots,m$ if and only if the ratios $\frac{\alpha_i}{w_i^2}=z_i/w_i-t(\beta)$ are strictly positive. The smallest of these ratios is achieved when $i=m$. Using the formula for $t(\beta)$ again, this quantity is positive if and only if the inequality
\[ \frac{z_m}{w_m}>\frac{-az_0+\sum_{i=1}^m w_iz_i}{a^2+\sum_{i=1}^m w_i^2} \]
holds, or equivalently, if $b_{m-1}<az_0$.

To finish the proof we need to verify that if $\zz$ is any point with $z_1/w_1\geq \dots\geq z_d/w_d\geq 0$ then there exists a unique index $m$ for which $b_{m-1}<az_0\leq b_{m}$. This follows from the fact that the $b_i$ form a non-decreasing sequence, as 
\[b_i-b_{i-1} =
\begin{cases}
\left(a^2+\sum_{r=1}^i w_r^2\right)\left(z_i/w_i-z_{i+1}/w_{i+1}\right) &\text{if $1\leq i\leq d-1$,}\\
\left(a^2+\sum_{r=1}^d w_r^2\right)z_d/w_d &\text{if $i=d$,}\\
\end{cases} \]  
is always nonnegative. It follows that for any $\zz\in \RR^{d+1}$ with $z_1/w_1\geq \dots\geq z_d/w_d\geq 0$ the projection in the direction of $C$ is given by $\pi_C(\zz)=\sum_{i=1}^m \alpha_i\uu_i=(s(\alpha)/a)\e_0+\sum_{i=1}^m(\alpha_i/w_i)\e_i$, as claimed. The formula for $\|\pi_C(\zz)\|^2$ follows by using the equalities $z_0+at(\beta)=s(\alpha)/a$
and $\alpha_i/w_i=z_i-w_it(\beta)$ derived in the proof of part $(1)$.
\end{proof}

Next we will relate the descent cones $D(I,\mathbf{w})$ of the norm $\|\cdot\|^w_1$ at a point $\mathbf{y}\in \RR^d$ with support $I := \supp(\mathbf{y})$ with the cones over weighted crosspolytopes from the previous lemma. As the descent cone depends only on $I$ and $\mathbf{w}$, in order to compute it we can assume that $\mathbf{y} = \frac{1}{k}\sum_{i \in I} \mathbf{e}_i/w_i$. 

\begin{lemma}\label{prop:isometric}
Suppose $|I|=k \neq d$. The descent cone $D(I,\mathbf{w})$ is isometric to the cone
 \[
  D'(I,\mathbf{w}) := \cone \{ \mathbf{e}_0/a \pm \mathbf{e}_i/w_i : i = 1, \dotsc, d-k\}
  \times \RR^{k-1},
 \]
 where $\mathbf{e}_0, \mathbf{e}_1, \dotsc, \mathbf{e}_{d-k}$ are the standard basis vectors of $\RR^{d-k+1}$, and 
 $a := \sqrt{\sum_{i \in I}w_i^2}$. 
\end{lemma}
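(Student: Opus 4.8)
The plan is to first realize the descent cone $D(I,\mathbf{w})$ as the solution set of a single inequality, and then exhibit an explicit orthogonal change of coordinates carrying it onto $D'(I,\mathbf{w})$. Throughout I write $J := [d]\setminus I$, $\mathbf{w}_I := (w_i)_{i\in I}$, and $a := \sqrt{\sum_{i\in I}w_i^2}$, and I take the representative $\mathbf{y}$ with $y_i>0$ for $i\in I$ and $y_i=0$ otherwise, so that $\supp(\mathbf{y})=I$. Since $\|\cdot\|_1^{\mathbf{w}}$ is convex, its (closed) descent cone at $\mathbf{y}$ is the cone of non-increasing directions $\{\mathbf{d} : (\|\cdot\|_1^{\mathbf w})'(\mathbf{y};\mathbf{d})\le 0\}$. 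Using the subdifferential description recalled in the proof of Lemma~\ref{lemma:bound} (with $\tau=1$), the directional derivative equals $\max_{\mathbf z\in\partial\|\mathbf y\|_1^{\mathbf w}}\langle \mathbf z,\mathbf d\rangle = \sum_{i\in I}w_i d_i + \sum_{j\in J}w_j|d_j|$, since $z_i=w_i$ is forced on $I$ while on $J$ the maximum over $|z_j|\le w_j$ is attained at $z_j=w_j\sign(d_j)$. Hence
\[
D(I,\mathbf{w}) = \Bigl\{\mathbf{d}\in\RR^d : \langle \mathbf{w}_I,\mathbf{d}_I\rangle + \textstyle\sum_{j\in J}w_j|d_j|\le 0\Bigr\}.
\]

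Next I would build the isometry by splitting coordinates. Decompose $\RR^d = \RR^I\oplus\RR^J$, and inside $\RR^I$ write $\mathbf{d}_I = t\,\widehat{\mathbf{w}}_I + \mathbf{d}_I^\perp$ with $\widehat{\mathbf{w}}_I := \mathbf{w}_I/a$ a unit vector, so that $\langle \mathbf{w}_I,\mathbf{d}_I\rangle = a t$ and $\|\mathbf{d}_I\|^2 = t^2 + \|\mathbf{d}_I^\perp\|^2$. Define a linear map $\Phi:\RR^d\to\RR^{d-k+1}\times\RR^{k-1}$ sending $\widehat{\mathbf{w}}_I\mapsto -\mathbf{e}_0$, mapping the standard basis of $\RR^J$ bijectively (after relabeling $J$ as $\{1,\dots,d-k\}$, so that the complement weights $w_j$ line up) onto $\mathbf{e}_1,\dots,\mathbf{e}_{d-k}$, and carrying the $(k-1)$-dimensional subspace $\widehat{\mathbf{w}}_I^{\perp}\subseteq\RR^I$ isometrically onto the $\RR^{k-1}$ factor. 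Because $\Phi$ takes an orthonormal basis to an orthonormal basis, it is a linear isometry.

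Finally I would check that $\Phi$ carries $D(I,\mathbf{w})$ onto $D'(I,\mathbf{w})$. Writing $(x_0,\dots,x_{d-k},\mathbf r)$ for the image coordinates, the construction gives $x_0=-t$ and $x_j = d_j$, so the defining inequality of $D(I,\mathbf w)$ becomes $-a x_0 + \sum_{j}w_j|x_j|\le 0$, with $\mathbf{r}\in\RR^{k-1}$ unconstrained. By Lemma~\ref{lem: basic}(2), the inequalities $-a x_0 + \sum_j \pm w_j x_j\le 0$ over all sign choices (which is precisely what the absolute-value inequality encodes) are exactly the conditions $\langle \mathbf y,\mathbf x\rangle\le 0$ as $\mathbf y$ ranges over the generators of $C^*$; hence they cut out the cone $C=\cone\{\mathbf e_0/a\pm\mathbf e_i/w_i\}$ in dimension $d-k$. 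Therefore $\Phi(D(I,\mathbf{w})) = C\times\RR^{k-1} = D'(I,\mathbf{w})$, and since $\Phi$ is a linear isometry the two cones are isometric.

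The main obstacle I expect is bookkeeping rather than conceptual: correctly matching the single absolute-value inequality defining $D(I,\mathbf{w})$ with the full family of facet inequalities of the crosspolytope cone $C$ coming from Lemma~\ref{lem: basic}(2) (equivalently, recognizing the line $\RR\widehat{\mathbf w}_I$ as the ``$\mathbf e_0$-axis'' and its orthogonal complement as the free $\RR^{k-1}$ factor), choosing the sign $\widehat{\mathbf w}_I\mapsto -\mathbf e_0$ so that the image cone is $C$ rather than $-C$, and verifying that $\Phi$ is genuinely norm-preserving, which hinges on the orthogonal splitting $\|\mathbf{d}_I\|^2=t^2+\|\mathbf{d}_I^{\perp}\|^2$ together with the relabeling of the complement weights.
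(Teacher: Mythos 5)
Your proof is correct, but it follows a genuinely different route from the paper's. The paper argues with generators: it writes $D(I,\ww)$ as the conical hull of $\{\zz-\yy : \zz\in B_{\ww}\}$, splits the generators according to whether the index lies in $I$, identifies one summand as the lineality space $L$, uses the hypothesis $k\neq d$ to discard the redundant generators $-\e_i/w_i-\yy$ with $i\in I$, and finally rewrites the surviving generators, modulo a translation lying in $L$, as the generators $\e_0'/a\pm\e_i/w_i$ of a weighted crosspolytope cone inside $L^{\perp}$. You argue with inequalities instead: you realize $D(I,\ww)$ as the single sublevel set $\{\mathbf{d} : \langle\ww_I,\mathbf{d}_I\rangle+\sum_{j\in J}w_j|d_j|\le 0\}$ via the subdifferential, push it through an explicit orthogonal map, and identify the image as $C\times\RR^{k-1}$ by combining the dual description of $C$ from Lemma~\ref{lem: basic}(2) with biduality $C=(C^*)^*$ --- legitimate since $C$ is finitely generated, hence closed and convex, though you should say this explicitly, because Lemma~\ref{lem: basic}(2) by itself only computes $C^*$. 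Your route is shorter and makes transparent why $\ww_I/a$ plays the role of the $\e_0$-axis; the paper's route has the advantage of constructing exactly the orthonormal basis $\e_0'$, $\{\e_j\}_{j\in J}$, $\qq_1,\dots,\qq_{k-1}$ and the generator bookkeeping that is reused verbatim in the statement and proofs of Theorem~\ref{thm: main} and Proposition~\ref{prop:gradient}.

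One caveat on your first step: you justify the identity $D(I,\ww)=\{\mathbf{d} : (\|\cdot\|_1^{\ww})'(\yy;\mathbf{d})\le 0\}$ by convexity alone, and that is not enough. The paper's definition of the descent cone is a conical hull with no closure taken, and for a general convex function this cone can be strictly smaller than the cone of non-increasing directions: for the Euclidean norm at $\xx_0\neq\mathbf{0}$ the descent cone is the open halfspace $\{\mathbf{d} : \langle\xx_0,\mathbf{d}\rangle<0\}$ together with the origin, whereas the non-increasing directions form the closed halfspace. What rescues your identity here is polyhedrality, not convexity: since $y_i>0$ for $i\in I$, for all sufficiently small $t>0$ one has exactly $\|\yy+t\mathbf{d}\|_1^{\ww}=\|\yy\|_1^{\ww}+t\bigl(\langle\ww_I,\mathbf{d}_I\rangle+\sum_{j\in J}w_j|d_j|\bigr)$, and convexity of $t\mapsto\|\yy+t\mathbf{d}\|_1^{\ww}$ then shows that membership in the descent cone is equivalent to nonpositivity of the bracketed term, which in turn makes the cone closed. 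With that one line added, your argument is complete.
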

\begin{proof} Let $B_{\mathbf w}$ be the unit ball in the weighted norm $\|\cdot\|_1^w$. Since $|\!|\mathbf{y}|\!|_1^{\mathbf w} = 1$ the equality 
$D(I,\mathbf{w}) = \cone \{\mathbf z - \mathbf y : \mathbf z \in B_{\mathbf w}\}$ holds. As a result
 \begin{align*}
  D(I,\mathbf{w}) &= \cone \{\pm \mathbf{e}_i/w_i - \mathbf{y} : i = 1, \dotsc, d\} \\
  &= \cone \{\pm \mathbf{e}_i/w_i - \mathbf{y} : i \notin I\} + \cone \{-\mathbf{e}_i/w_i - \mathbf{y} : i \in I\} \\
  & \hspace{85mm} + \cone \{\mathbf{e}_i/w_i - \mathbf{y} : i \in I\}.
 \end{align*}
 Since the generators of the last cone satisfy the relation 
 $\sum_{i \in I} (\mathbf{e}_i/w_i - \mathbf{y}) = 0$, 
we conclude that this cone equals the $(k-1)$-dimensional subspace
 \[
  L = \{ \mathbf{z} \in \RR^d : {\textstyle  \sum_{i\in I} w_i z_i = 0} 
  \text{ and } z_i=0 \text{ for any } i \notin I \}.
 \]
 We thus have
 \[
  D(I,\mathbf{w}) = \cone \{\pm \mathbf{e}_i/w_i - \mathbf{y} : i \notin I\} + 
  \cone \{-\mathbf{e}_i/w_i - \mathbf{y} : i \in I\} + L.
 \]
 Since $k \neq d$, the first summand in this expression contains the vector $-2\mathbf{y}$.
 It follows that the middle summand in the expression is redundant, because 
 for any $i \in I$ we have $(-\mathbf{e}_i/w_i-\mathbf{y}) + 2\mathbf{y} \in L$.
 Therefore
 \begin{equation}\label{eq:descent}
  D(I,\mathbf{w}) = \cone \{\pm \mathbf{e}_i/w_i - \mathbf{y} : i \notin I\} + L.
 \end{equation}
 
 Now, let $a := \sqrt{\sum_{i \in I}w_i^2}$. The vectors $\mathbf{e}_0', \mathbf{e}_1', \dotsc, \mathbf{e}_{d-k}'$ defined as 
 $\mathbf{e}'_0 := -\frac{1}{a}\sum_{i \in I} w_i \mathbf{e}_i$
 and $\{ \mathbf{e}'_1, \dotsc, \mathbf{e}'_{d-k} \} := \{ \mathbf{e}_i : i \notin I \}$ form 
 an orthonormal basis for the orthogonal complement $L^\perp$.
 We can write the generators of the cone in the right hand side of 
 Equation (\ref{eq:descent}) 
 as $\pm \mathbf{e}_i/w_i - y = (\mathbf{e}'_0/a \pm \mathbf{e}_i/w_i) - (y + \mathbf{e}'_0/a)$. The vector
 $\mathbf{y} + \mathbf{e}'_0/a$ is in the subspace $L$, so we have
 \begin{align*}
  D(I,\mathbf{w}) &= \cone \{(\mathbf{e}'_0/a \pm \mathbf{e}_i/w_i) - (\mathbf{y} + \mathbf{e}'_0/a) : i \notin I\} + L \\
  &= \cone \{ \mathbf{e}'_0/a \pm \mathbf{e}_i/w_i : i \notin I\} + L \\
  &= \cone \{ \mathbf{e}'_0/a \pm \mathbf{e}'_i/w_i : i = 1, \dotsc, d-k\} + L,
 \end{align*}
 from which the claimed result follows. 
\end{proof}

We are now ready to prove the main result of this section, which gives us explicit formulas for the projection onto arbitrary descent cones $D(I,\ww)$. For the reader's convenience we summarize the relevant notation below,

\noindent {\bf Notation.} For $\yy\in \RR^d$ let $I={\rm supp}(\yy)$ and $k:=|I|$. Define $J:=\{1,\dots, d\}\setminus I$ and $a:=\sqrt{\sum_{i\in I}w_i^2}$. Let $\qq_1,\dots, \qq_{k-1}$ be an orthonormal basis of the subspace $L=\{x\in \RR^d: \sum w_ix_i=0, \forall j\in J(x_j=0)\}$, and take $\e_0':=-\frac{1}{a}\sum_{i \in I} w_i \mathbf{e}_i$. Note that $\qq_1,\dots, \qq_{k-1}$, $\e_0'$ and the $\e_j$ with $j\in J$ form an orthonormal basis of $\RR^d$. We will describe the formula for the projection onto $D(I,\ww)$ in that basis.

\begin{theorem}\label{thm: main} For $\zz=z_0\e_0'+\sum_{j\in J}z_j\e_j +\sum_{j=1}^{k-1} g_k\qq_k\in \RR^{d}$ let $j_1,\dots, j_{d-k}$ be a permutation of $J$ such that $|z_{j_1}|/w_{j_1}\geq \dots \geq |z_{j_{d-k}}|/w_{j_{d-k}}$. Define 
\[b_l:=\begin{cases}
\sum_{i=1}^l w_{j_i}|z_{j_i}| -\frac{\left(a^2+\sum_{i=1}^lw_{j_i}^2\right)}{w_{j_l+1}}|z_{j_l+1}| &\text{if $1\leq l\leq d-k-1$,}\\
\sum_{i=1}^{d-k} w_{j_i}|z_{j_i}| &\text{if $l=d-k$.}
\end{cases}
\]
and let $m$ be the unique integer such that $b_{m-1}<az_0\leq b_m$ (with the convention that $b_{-1}=-\infty$ and $b_{d-k+1}=\infty$). If $t:=\frac{-az_0+\sum_{i=1}^m w_{j_i}|z_{j_i}|}{a^2+\sum_{i=1}^m w_{j_i}^2}$, $\alpha_i:=w_{j_i}|z_{j_i}|-w_{j_i}^2t$ and $\uu_i:=\e_0'/a+({\rm sign}(z_{j_i})/w_{j_i})\e_i$ for $i=1,\dots, m$, then the following statements hold:
\begin{enumerate}
\item The projection of $\zz$ towards $C:=D(I,\ww)$ is in the relative interior of a face of dimension $m+k-1$, and is given by the formula
\[\pi_C(\zz)=\sum_{i=1}^m \alpha_i\uu_i + \sum_{j=1}^{k-1} g_k\qq_k.\]

\item The following equality holds
\[\|\pi_C(\zz)\|^2=(z_0+at)^2+\sum_{i=1}^m (|z_{j_i}|-w_{j_i}t)^2+\sum_{j=1}^{k-1} g_j^2\]
\end{enumerate}
\end{theorem}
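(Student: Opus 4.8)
The plan is to stitch together the three preceding lemmas. By Lemma~\ref{prop:isometric} the cone $C:=D(I,\ww)$ decomposes, inside the orthogonal splitting $\RR^d = L^\perp\oplus L$, as a product $C = C'\times L$, where $L=\spann\{\qq_1,\dots,\qq_{k-1}\}$ and $C':=\cone\{\e_0'/a\pm\e_j/w_j: j\in J\}\subseteq L^\perp$ is a cone over a weighted crosspolytope. Since $C'\subseteq L^\perp$ and $L^\perp\perp L$, for any $\cc\in C'$ and $\ell\in L$ the squared distance from $\zz=\zz_{L^\perp}+\zz_L$ (with $\zz_L=\sum_{j=1}^{k-1}g_j\qq_j$) splits as $\|\zz_{L^\perp}-\cc\|^2+\|\zz_L-\ell\|^2$. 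Minimizing each term separately, I would conclude $\pi_C(\zz)=\pi_{C'}(\zz_{L^\perp})+\zz_L$. This already produces the summand $\sum_{j=1}^{k-1}g_j\qq_j$ of the projection and the term $\sum_{j=1}^{k-1}g_j^2$ of the squared norm, and reduces the theorem to computing $\pi_{C'}$ on $L^\perp$.

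Next I would bring $\pi_{C'}$ into the exact form solved by Lemma~\ref{lem: key}. Let $\widetilde C\subseteq\RR^{d-k+1}$ be the model crosspolytope cone of that lemma instantiated with the $d-k$ weights $w_{j_1},\dots,w_{j_{d-k}}$ (and the same $a$), and define the orthogonal map $\Phi\colon\RR^{d-k+1}\to L^\perp$ by $\Phi(\e_0)=\e_0'$ and $\Phi(\e_i)=\sign(z_{j_i})\,\e_{j_i}$ for $i=1,\dots,d-k$. Using the sign-change invariance from Lemma~\ref{lem: basic}(1), $\Phi$ maps the generators $\e_0/a\pm\e_i/w_{j_i}$ of $\widetilde C$ bijectively onto those of $C'$, so $\Phi(\widetilde C)=C'$; moreover $\Phi^{-1}(\zz_{L^\perp})=z_0\e_0+\sum_{i=1}^{d-k}|z_{j_i}|\,\e_i$, whose coordinates are nonnegative and, by the choice of the ordering $j_1,\dots,j_{d-k}$, satisfy $|z_{j_1}|/w_{j_1}\geq\dots\geq|z_{j_{d-k}}|/w_{j_{d-k}}\geq 0$. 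Thus Lemma~\ref{lem: key} applies verbatim to $\Phi^{-1}(\zz_{L^\perp})$ and $\widetilde C$: under the identification $w_i\leftrightarrow w_{j_i}$, $z_i\leftrightarrow|z_{j_i}|$, its quantities $b_l$, $m$, $t$, $\alpha_i$ coincide with those in the theorem. Since an orthogonal map intertwines metric projections, $\pi_{C'}=\Phi\circ\pi_{\widetilde C}\circ\Phi^{-1}$, and applying $\Phi$ to $\pi_{\widetilde C}(\Phi^{-1}(\zz_{L^\perp}))=\sum_{i=1}^m\alpha_i(\e_0/a+\e_i/w_{j_i})$ yields $\pi_{C'}(\zz_{L^\perp})=\sum_{i=1}^m\alpha_i\uu_i$, lying in the relative interior of an $m$-dimensional face of $C'$, while isometry gives $\|\pi_{C'}(\zz_{L^\perp})\|^2=(z_0+at)^2+\sum_{i=1}^m(|z_{j_i}|-w_{j_i}t)^2$.

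To finish I would reassemble the two factors. Every face of the product $C'\times L$ has the form $F\times L$ for a face $F$ of $C'$, with relative interior equal to the product of the relative interior of $F$ with $L$; hence $\pi_C(\zz)=\pi_{C'}(\zz_{L^\perp})+\zz_L$ lies in the relative interior of a face of dimension $m+(k-1)$, which is statement~$(1)$. Adding $\|\pi_{C'}(\zz_{L^\perp})\|^2$ to $\|\zz_L\|^2=\sum_{j=1}^{k-1}g_j^2$ gives statement~$(2)$.

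The step I expect to be most delicate is the construction of $\Phi$ and the verification that it is a genuine \emph{isometry} identifying $\widetilde C$ with $C'$. The tempting shortcut of reducing to the sorted case through the weighted permutation $\sigma_w$ of Lemma~\ref{lem: basic} fails, because $\sigma_w=S^{-1}\sigma S$ rescales each coordinate by $w_i/w_{\sigma(i)}$ and so is not orthogonal; it would preserve the cone but distort both the projection and the norm. The correct device is to push the reordering of $J$ into the relabeling of the weights $w_{j_i}$ and to absorb the signs of the $z_j$ into the orthogonal sign-change symmetry of $C'$, so that the map relating the two cones preserves the Euclidean structure. Keeping precise track of where $\sign(z_{j_i})$ enters the generators $\uu_i$ is the main bookkeeping hazard.
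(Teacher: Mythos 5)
Your proof is correct and takes essentially the same route as the paper: the paper's own proof is precisely the two-step reduction you describe --- Lemma~\ref{prop:isometric} to split off the subspace $L$ (yielding the $\sum_j g_j\qq_j$ and $\sum_j g_j^2$ terms), then Lemma~\ref{lem: basic}(1) together with Lemma~\ref{lem: key} to handle the crosspolytope factor in $L^\perp$. Your closing observation is in fact a worthwhile refinement of the paper's terse argument: the citation of Lemma~\ref{lem: basic}(1) leaves implicit how one reduces to the sorted nonnegative case of Lemma~\ref{lem: key}, and you are right that the weighted permutation $\sigma_w=S^{-1}\sigma S$ is not orthogonal and hence cannot be used to transport metric projections; your device of instead relabeling the weights as $w_{j_1},\dots,w_{j_{d-k}}$ inside Lemma~\ref{lem: key} (which is stated for arbitrary positive weights) and absorbing the signs of the $z_j$ into the genuinely orthogonal map $\Phi$ is exactly the right way to make that step rigorous.
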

\begin{proof} By Lemma~\ref{prop:isometric} the projection is the direct sum of the identity in $L$ and the usual projection to the cone $D'(I,\ww)\subseteq L^{\perp}$. The correctness of the above formulas therefore follows from Lemma~\ref{lem: basic} part $(1)$ and Lemma~\ref{lem: key}.
\end{proof}

The previous Theorem suggests a simple Monte Carlo algorithm for estimating the expected statistical dimension $\overline{\delta}(\ww)$ for a given set of weights $\ww$, namely:

\begin{enumerate}
\item Generate $N$ independent samples $\mathbf{X}_i \sim \mathcal{F}$, $i=1,\dots, n$.
\item For each $i$ generate an independent gaussian random vector $\zz \in \RR^d$ and compute via Theorem~\ref{thm: main} the dimension $V_i$ of the face of $D({\rm supp}(\mathbf{X}_i),\ww)$ whose relative interior contains $\pi_C(\zz)$. 

\item Return $\bar{V}:=\frac{\sum V_i}{N}$.
\end{enumerate}

\begin{remark} By the orthonormality of the basis used in Theorem~\ref{thm: main} it is possible to sample a Gaussian random vector by putting independent standard normal coefficients in this basis. The dimension computation only depends on the coefficients $z_0$ and $z_j$ for $j\in J$, so only those need to be sampled reducing the computation time.
\end{remark}

How many samples $V_i$ are enough to get a good estimation of $\overline{\delta}(\ww)$? Proposition \ref{prop:hoeffding} answers this question. 

\begin{proposition}\label{prop:hoeffding} Let $(V_i)$ be i.i.d. random variables generated by the above Algorithm with input $\mathbf{w}$ and $\mathcal{F}$. If $\bar{V} = \frac{1}{n}\sum_{i=0}^n V_i$ then
$\PP(|\bar{V} - \bar{\delta}(\mathbf{w})| > t ) \leq \varepsilon$ whenever $n \geq \frac{\log(2/\varepsilon) d^2}{2t^2}.$
\end{proposition}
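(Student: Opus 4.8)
The plan is to apply Hoeffding's inequality to the i.i.d.\ sequence $(V_i)$. The key observation is that each $V_i$ is a bounded random variable: by construction, $V_i$ is the dimension of a face of the descent cone $D(\supp(\mathbf{X}_i),\mathbf{w})\subseteq\RR^d$, so $V_i$ takes values in $\{0,1,\dots,d\}$ and in particular $0\leq V_i\leq d$. Moreover, since the algorithm draws $\mathbf{X}_i\sim\mathcal{F}$ and an independent Gaussian $\zz$, the distribution of $V_i$ matches the distribution of the dimension of the face of $D(\supp(\mathbf{X}_0),\mathbf{w})$ whose relative interior contains $\pi_C(\zz)$; by the very definition of the $k$th expected intrinsic volume $\bar\nu_k(\mathbf{w})$ and of the expected statistical dimension, this gives $\EE[V_i]=\sum_{k=0}^d k\,\bar\nu_k(\mathbf{w})=\overline{\delta}(\mathbf{w})$. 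Thus $\bar V$ is an unbiased estimator of the quantity we wish to bound.

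First I would record that the $V_i$ are independent, identically distributed, and confined to the interval $[0,d]$, so that $a_i=0$ and $b_i=d$ in the standard two-sided Hoeffding bound. Then I would invoke Hoeffding's inequality, which for $n$ i.i.d.\ variables supported on $[0,d]$ yields
\[
\PP\bigl(|\bar V-\EE[\bar V]|>t\bigr)\leq 2\exp\!\left(-\frac{2n t^2}{d^2}\right).
\]
Substituting $\EE[\bar V]=\overline{\delta}(\mathbf{w})$ from the previous paragraph identifies the left-hand side with the probability in the statement. Finally I would set the right-hand side to be at most $\varepsilon$: solving $2\exp(-2nt^2/d^2)\leq\varepsilon$ gives $2nt^2/d^2\geq\log(2/\varepsilon)$, i.e.\ $n\geq \frac{\log(2/\varepsilon)\,d^2}{2t^2}$, which is exactly the claimed sample-size threshold.

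There is no genuine obstacle here; the proposition is a direct corollary of a concentration inequality. The only point that requires a moment's care is the unbiasedness claim $\EE[V_i]=\overline{\delta}(\mathbf{w})$, which rests on correctly matching the definition of intrinsic volumes (the probability that the Gaussian projection lands in the relative interior of a $k$-dimensional face) with the face-dimension output of the algorithm justified by Theorem~\ref{thm: main}, together with averaging over the support distribution $\mathcal{F}$. Everything else is the bookkeeping of plugging the support bound $[0,d]$ into Hoeffding and rearranging the resulting exponential inequality.
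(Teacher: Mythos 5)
Your proof is correct and follows essentially the same route as the paper: bound the range of the $V_i$ by $[0,d]$, apply Hoeffding's inequality to get $\PP(|\bar V - \overline{\delta}(\mathbf{w})| > t) \leq 2\exp(-2nt^2/d^2)$, and solve for $n$. The only difference is that you explicitly verify the unbiasedness $\EE[V_i]=\overline{\delta}(\mathbf{w})$ via the definition of expected intrinsic volumes, a step the paper leaves implicit; this is a welcome clarification but not a different argument.
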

\begin{proof}
The variables $(V_i)_i^n$ have a bounded range, between $0$ and $d$. Therefore, the hypotheses for Hoeffding's inequality, \cite{hoeffding}, are fulfilled and 
\[\PP(|\bar{V} - \overline{\delta}(\ww)| > t ) \leq 2\exp\left(-\dfrac{2nt^2}{d^2}\right). \]
By taking $n$ as in the theorem the result follows. 
\end{proof}

\section{A Monte Carlo gradient descent algorithm}\label{section:DescentAlgo}

In this section we develop a Monte Carlo gradient descent algorithm for minimizing $\overline{\delta}(\mathbf{w})$. Our algorithm depends on having an analytic formula for the derivative of the squared length of the projection onto a descent cone. The relationship between this formula and the statistical dimension is explained by the following lemma. 

\begin{proposition} \label{prop:commutative} Let $\mathbf{X}_0$ be a random vector with distribution $\mathcal{F}$ and let $I = \text{supp}(\mathbf{X}_0)$. Take $C:= D(I,\mathbf{w})$ and $\bar{\delta}(\mathbf{w})$ the expected statistical dimension defined in \eqref{def:expectedStatisticalDim}. Then 
\[\dfrac{\partial \bar{\delta}(\mathbf{w})}{\partial w_s} = \EE_I \EE_\mathbf{g} \left(\dfrac{\partial \|\pi_{C}(\mathbf{g})\|^2}{\partial w_s}\right)\]
for any $s = 1, \dots, d$. 
\end{proposition}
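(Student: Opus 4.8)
The plan is to reduce the statement to a single application of the theorem on differentiation under the integral sign. First I would rewrite $\bar{\delta}(\mathbf w)$ as a double expectation: by the lemma identifying the statistical dimension with the expected squared projection norm, together with the definition \eqref{def:expectedStatisticalDim},
\[
\bar{\delta}(\mathbf w) = \EE_I\,\EE_{\mathbf g}\bigl[\|\pi_{D(I,\mathbf w)}(\mathbf g)\|^2\bigr].
\]
The outer expectation $\EE_I$ is the finite sum $\sum_{I\subseteq[d]} q_I\,(\cdot)$, so $\partial/\partial w_s$ commutes with it trivially by linearity. It therefore suffices to justify, for each fixed support $I$, the interchange
\[
\frac{\partial}{\partial w_s}\EE_{\mathbf g}\bigl[\|\pi_C(\mathbf g)\|^2\bigr] = \EE_{\mathbf g}\Bigl[\frac{\partial}{\partial w_s}\|\pi_C(\mathbf g)\|^2\Bigr],\qquad C=D(I,\mathbf w).
\]

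For this I would invoke the dominated-convergence form of the Leibniz rule. Writing $\phi(\mathbf g,\mathbf w):=\|\pi_{D(I,\mathbf w)}(\mathbf g)\|^2$, the interchange holds provided (i) for almost every $\mathbf g$ the map $w_s\mapsto\phi(\mathbf g,\mathbf w)$ is differentiable near the given weight vector, and (ii) there is a Gaussian-integrable function $G(\mathbf g)$ dominating the difference quotients, equivalently a local Lipschitz bound $|\phi(\mathbf g,\mathbf w')-\phi(\mathbf g,\mathbf w)|\le G(\mathbf g)\,|w_s'-w_s|$ for $\mathbf w'$ in a neighborhood. Both are read off from the explicit formula in Theorem~\ref{thm: main}. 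For (i), note that for fixed $\mathbf w$ the formula is a smooth (indeed rational) function of the coordinates except on the locus where the combinatorial data of the projection is ambiguous: where two of the ratios $|z_{j_i}|/w_{j_i}$ coincide, where one of the boundary equalities $az_0=b_l$ holds, or where some $z_{j_i}=0$. Each such locus is the zero set of a nontrivial (piecewise-)polynomial in $\mathbf g$, hence has Lebesgue—and therefore Gaussian—measure zero; off it the index $m$, the ordering, and the signs are locally constant as $(\mathbf g,\mathbf w)$ vary, so $\phi$ agrees locally with a fixed smooth function of $\mathbf w$ and is differentiable.

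For (ii) I would track the growth of the formula. Since $a^2+\sum_{i=1}^m w_{j_i}^2\ge a^2>0$ stays bounded below on a neighborhood of $\mathbf w$, the quantity $t$ is a bounded linear combination of the coordinates, so $|t|\le C_1\|\mathbf g\|$; consequently $(z_0+at)^2+\sum_i(|z_{j_i}|-w_{j_i}t)^2+\sum_j g_j^2$ and its $w_s$-derivative are $O(\|\mathbf g\|^2)$, with constants that depend continuously—hence locally boundedly—on $\mathbf w$. Here one must also note that the $\mathbf w$-dependent orthonormal basis $\e_0',\qq_1,\dots,\qq_{k-1}$ varies smoothly as long as $a>0$, so the coordinates and their $w_s$-derivatives are themselves $O(\|\mathbf g\|)$ and do not spoil the estimate. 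Taking $G(\mathbf g)=C\|\mathbf g\|^2$ on a compact neighborhood of $\mathbf w$ then works: since $\phi$ is continuous in $\mathbf w$ and piecewise $C^1$ with gradient bounded by $G$ on the finitely many regions cut out by polynomial inequalities, the required Lipschitz domination follows, and $\EE_{\mathbf g}[\|\mathbf g\|^2]=d<\infty$. Applying dominated convergence to the difference quotients—bounded by $G$ and converging a.e. to $\partial_{w_s}\phi$—completes the interchange.

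I expect the main obstacle to be item (ii): producing the uniform integrable domination cleanly. The subtlety is twofold. The projection formula is only piecewise smooth, so I must combine continuity of $\phi$ in $\mathbf w$ with the piecewise gradient bound to obtain a genuine Lipschitz estimate across the several regions; and the coordinate system itself depends on $\mathbf w$, so the chain-rule bookkeeping must confirm that this dependence preserves the $O(\|\mathbf g\|^2)$ growth. The boundary supports—most notably $|I|=d$, where the descent cone is a halfspace and Theorem~\ref{thm: main} does not directly apply—are handled by the same argument using the smooth dependence of the halfspace normal on $\mathbf w$, which I would dispatch in a single sentence.
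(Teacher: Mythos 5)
Your proof takes essentially the same route as the paper's: the paper's (very terse) argument is exactly that $\EE_I$ is a finite sum and that differentiation under $\EE_{\mathbf g}$ is justified by a measure-theoretic Leibniz rule. Your extra work---checking almost-everywhere differentiability off the measure-zero locus where the combinatorial data of Theorem~\ref{thm: main} degenerates, and exhibiting the Gaussian-integrable dominating bound of order $\|\mathbf g\|^2$---simply verifies the hypotheses that the paper leaves implicit, so the two arguments agree in substance.
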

\begin{proof}
It is possible to commute the differential operator $\frac{\partial(\cdot)}{\partial w_l}$ with the two expected values. Indeed, the first expected value  $\EE_I$ is a simple sum, and for $\EE_\mathbf{g}$ we use a measure theoretic version of the Leibniz integral rule.
\end{proof}

Motivated by the previous proposition we derive an analytic formula for the gradient of $\|\pi_{C}(\mathbf{g})\|^2$ with respect to the weights. The main difficulty lies in the fact that the numbers $g_k$ appearing in Theorem~\ref{thm: main} part $(2)$ depend on the weights $\ww$ for a fixed value of $\zz$. To capture this dependency we will compute the projection in the canonical basis, which uses the notation summarized in the paragraph preceding Theorem~\ref{thm: main}.

\begin{proposition} \label{prop:gradient} For $\zz=(z_1,\dots,z_d)\in \RR^d$ let $a:=\sqrt{\sum_{i\in I}w_i^2}$, $z_0:=-\sum_{i\in I} w_iz_i/a$, and let $j_1,\dots, j_{d-k}$ be a permutation of $J$ such that $|z_{j_1}|/w_{j_1}\geq \dots \geq |z_{j_{d-k}}|/w_{j_{d-k}}$. Define 
\[b_l:=\begin{cases}
\sum_{i=1}^l w_{j_i}|z_{j_i}| -\frac{\left(a^2+\sum_{i=1}^lw_{j_i}^2\right)}{w_{j_l+1}}|z_{j_l+1}| &\text{if $1\leq l\leq d-k-1$,}\\
\sum_{i=1}^{d-k} w_{j_i}|z_{j_i}| &\text{if $l=d-k$,}
\end{cases}
\]
and let $m$ be the unique integer such that $b_{m-1}<az_0\leq b_m$ (with the convention that $b_{-1}=-\infty$ and $b_{d-k+1}=\infty$). If the inequalities $|z_{j_1}|/w_{j_1}\geq \dots \geq |z_{j_{d-k}}|/w_{j_{d-k}}$ and $az_0\leq b_m$ are strict then the following formula holds:

\[\frac{\partial \|\pi_C(\zz)\|^2}{\partial w_s} = \frac{\partial}{\partial w_s}\left( (z_0+at)^2-z_0^2 + \sum_{i=1}^m (|z_{j_i}|-w_{j_i}t)^2\right) \]
where 
\[t=\frac{\sum_{i\in I} w_iz_i +\sum_{j=1}^m w_{j_i}|z_{j_i}|}{\sum_{j\in J}w_j^2 + \sum_{i=1}^m w_{j_i}^2}.\]
\end{proposition}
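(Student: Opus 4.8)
The plan is to start from the expression for $\|\pi_C(\zz)\|^2$ provided by Theorem~\ref{thm: main}(2), namely
\[\|\pi_C(\zz)\|^2=(z_0+at)^2+\sum_{i=1}^m (|z_{j_i}|-w_{j_i}t)^2+\sum_{j=1}^{k-1} g_j^2,\]
and differentiate it with respect to $w_s$. The subtlety, flagged in the paragraph preceding the proposition, is that this identity is written in the $\ww$-dependent orthonormal basis $\qq_1,\dots,\qq_{k-1}$ of the subspace $L=L(\ww)$, so the coordinates $g_j=\langle\zz,\qq_j\rangle$ are themselves functions of $\ww$ for a \emph{fixed} $\zz$. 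One therefore cannot treat the term $\sum_j g_j^2$ as a constant when differentiating, and this is the main obstacle.

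The resolution I would use is to rewrite that term so that its $\ww$-dependence becomes explicit and canonical. Since $\qq_1,\dots,\qq_{k-1}$ is an orthonormal basis of $L$, the quantity $\sum_{j=1}^{k-1} g_j^2$ is the squared length of the orthogonal projection of $\zz$ onto $L$. Using that $\{\e_0'\}\cup\{\e_j:j\in J\}$ is an orthonormal basis of $L^{\perp}$ (as recorded in the Notation paragraph), I would write
\[\sum_{j=1}^{k-1} g_j^2=\|\zz\|^2-z_0^2-\sum_{j\in J} z_j^2,\]
where $z_0=\langle\zz,\e_0'\rangle=-\tfrac1a\sum_{i\in I}w_iz_i$ and, for $j\in J$, $z_j=\langle\zz,\e_j\rangle$ is the $j$th canonical coordinate of $\zz$. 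For a fixed $\zz$ in the canonical basis, both $\|\zz\|^2=\sum_{i=1}^d z_i^2$ and $\sum_{j\in J}z_j^2$ are independent of $\ww$, while $z_0^2$ retains its dependence through $a=\sqrt{\sum_{i\in I}w_i^2}$ and the weights $w_i$ with $i\in I$. Substituting this and discarding the $\ww$-independent terms upon differentiation yields exactly
\[\frac{\partial \|\pi_C(\zz)\|^2}{\partial w_s}=\frac{\partial}{\partial w_s}\left((z_0+at)^2-z_0^2+\sum_{i=1}^m (|z_{j_i}|-w_{j_i}t)^2\right),\]
which is the claimed identity. The displayed formula for $t$ is then just the one of Theorem~\ref{thm: main} rewritten via $-az_0=\sum_{i\in I}w_iz_i$ and $a^2=\sum_{i\in I}w_i^2$.

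What still needs justification is that $\ww\mapsto\|\pi_C(\zz)\|^2$ is genuinely differentiable at $\ww$, so that the smooth expression above is the honest derivative rather than a one-sided object, and this is precisely the role of the strict-inequality hypotheses. The sorting permutation $j_1,\dots,j_{d-k}$ and the index $m$ are the only discrete data entering the projection formula; when $|z_{j_1}|/w_{j_1}>\dots>|z_{j_{d-k}}|/w_{j_{d-k}}$ and $az_0<b_m$ hold strictly, these data remain constant under small perturbations of $\ww$, since the quantities $|z_{j_i}|/w_{j_i}$, $az_0$, and $b_m$ all vary continuously in $\ww$ for fixed $\zz$. Hence on a neighborhood of $\ww$ the map agrees with a single fixed smooth expression, and ordinary differentiation applies. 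I expect the only remaining work to be the routine chain-rule computation of the right-hand side, tracking that $a$, $z_0$, and $t$ each depend on $w_s$; the conceptual content is entirely in the rewriting of $\sum_j g_j^2$ together with the local constancy of $m$ and the permutation.
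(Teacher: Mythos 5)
Your proposal is correct and follows essentially the same route as the paper's proof: both convert the $\ww$-dependent coordinates $\sum_j g_j^2$ into $\|\zz\|^2 - z_0^2 - \sum_{j\in J} z_j^2$ via the orthogonal decomposition along $L$ and $L^{\perp}$, apply Theorem~\ref{thm: main}(2), discard the $\ww$-independent terms, and invoke the strict inequalities to guarantee the expression is valid (hence differentiable) in a neighborhood of $\ww$. Your explanation of why strictness makes the permutation and the index $m$ locally constant is a slightly more explicit account of what the paper states in one line, but the argument is the same.
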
 

\begin{proof} By Lemma~\ref{prop:isometric} the projection is the direct sum of the identity in $L$ and the usual projection to the cone $D'(I,\ww)\cap L^{\perp}$. As a result the equality $\pi_C(\zz)= \pi_{L}(\zz)+ \pi_C(\pi_{L^{\perp}}(\zz))$ holds. The right hand side is a decomposition into mutually orthogonal vectors and therefore the equalities
\[\|\pi_C(\zz)\|^2 = \|\pi_{L}(\zz)\|^2+\|\pi_C(\pi_{L^{\perp}}(\zz))\|^2 = \left(\|\zz\|^2-\|\pi_{L^{\perp}}(\zz)\|^2\right)+\|\pi_C(\pi_{L^{\perp}}(\zz))\|^2\]
hold. As shown in Lemma~\ref{prop:isometric} the vectors $\e_0'$ and $\{\e_j:j\in J\}$ form an orthonormal basis for $L^{\perp}$ and therefore 
\[\pi_{L^{\perp}}(\zz)=z_0\e_0'+\sum_{j\in J} z_j\e_j.\]
where $z_0$ is defined as above. Next we apply Theorem~\ref{thm: main} part $(2)$ to $\pi_{L^{\perp}}$ and conclude that  
\[\|\pi_C(\zz)\|^2= \|z\|^2-\left(z_0^2+\sum_{j\in J}z_j^2\right) + (z_0+at)^2+\sum_{i=1}^m (|z_{j_i}|-w_{j_i}t)^2.\]
where $t$ is given by the above expression.
Moreover this expression is valid in an open neighborhood of $\ww$ since all the above inequalities are assumed to be strict.
Since the $z_i$ for $i\neq 0$ do not depend on the weights $\ww$ we conclude that
\[\frac{\partial \|\pi_C(\zz)\|^2}{\partial w_s} = \frac{\partial}{\partial w_s}\left( (z_0+at)^2-z_0^2 + \sum_{i=1}^m (|z_{j_i}|-w_{j_i}t)^2\right) \]
as claimed.
\end{proof}

The previous two propositions suggest a Monte Carlo algorithm for estimating the gradient of the expected statistical dimension $\nabla_{\ww}\bar{\delta}(\ww)=\EE_I \EE_\mathbf{g} \left(\dfrac{\partial \|\pi_{C}(\mathbf{g})\|^2}{\partial w_s}\right)$, namely:

\begin{enumerate}
\item Generate $N$ independent samples $\mathbf{X}_i \sim \mathcal{F}$ in $\RR^d$, $i=1,\dots, n$.
\item For each $i$ generate an independent gaussian random vector $\zz \in \RR^d$ and compute via Proposition~\ref{prop:gradient} the vector $D_i:=\nabla_{\ww}\left(\|\pi_C(\zz)\|^2\right)$ where $C=D({\rm supp}(\mathbf{X}_i),\ww)$. 
\item Return the vector $\bar{D}:=\frac{\sum D_i}{N}$.
\end{enumerate}

Using this procedure we developed a method of steepest descent for approximating the weights $\mathbf w$ that minimize the expected statistical dimension $\bar{\delta}$. In each iteration, we compute an estimate $\bar{D}$ and we aim to walk in the direction opposite to it. To decide about the step size we use a backtracking-like approach, i.e. we start with a step size $\tau$ that does not violate the nonnegativity of the weights and then we use the Monte Carlo from the previous Section to check if $\mathbf{w}_k - \tau \bar{D}$ makes the expected statistical dimension smaller; if it is the case then we update the weights, otherwise we set $\tau = \tau/2$ and repeat. In the following section, we present some numerical examples on the practical performance of this algorithm. 

\section{Numerical examples}\label{Applications}

We consider three numerical examples. In two of them, first and third, we obtain promising results for the way we choose our weights: our recovery algorithm using the suitably weighted $\ell_1$-norm outperforms the non-weighted approach. To select the weights we employed two methods, the one described in Theorem \ref{teo:ProtoOptimal} and the numerical algorithm described in Section \ref{section:DescentAlgo}. In the second example we show a particular setting in which our approach might not always be better. 

We shall note that we always initialize our numerical algorithm to find the weights with the vector $\mathbf{w} = (1, \cdots, 1)$. Interestingly, if we initialize the algorithm with the weights of Theorem \ref{teo:ProtoOptimal}, the numerical method fails to find a non-negligible step size to continue. This suggests that these weights are a local optimum.

All the experiments were performed using MATLAB and the CVX package with Gurobi as solver. We present three kinds of figures: Recovery Frequency, Expected Intrinsic Volumes and Histograms of the statistical dimensions. To draw the Recovery Frequency figures we executed the following procedure: for each $m$, number of measurements, generate $100$ independent instances of each problem with the given distribution and with them estimate the frequency of perfect recovery. We defined $10^{-5}$ to be our success tolerance. For the Expected Intrinsic Volumes figures, we ran the next algorithm: generate $1000$ supports with the given distribution and for each support generate $100$ points following the algorithm in Section \ref{section: Formulas}, count the frequency to estimate $\bar{\nu}_k$ for all $k$. We only present the histogram in the first experiment, where we explain how we made it.


\subsection{Independent Bernoulli entries}\label{Bernoulli}
\begin{figure}[tbp]
\begin{subfigure}[b]{0.48\textwidth}
                \includegraphics[width=\textwidth]{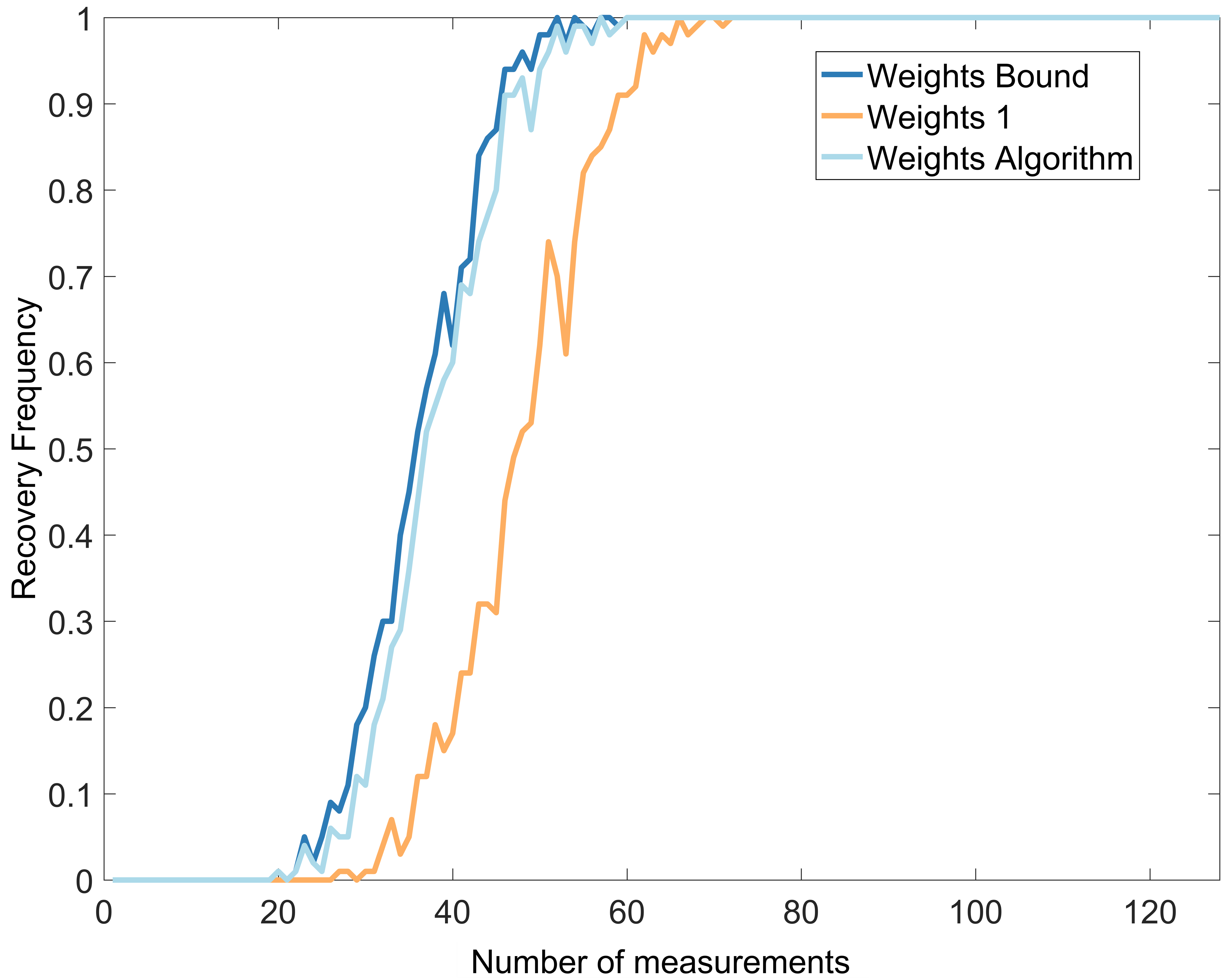}
        \end{subfigure}
        ~ 
        \begin{subfigure}[b]{0.48\textwidth}
                \includegraphics[width=\textwidth]{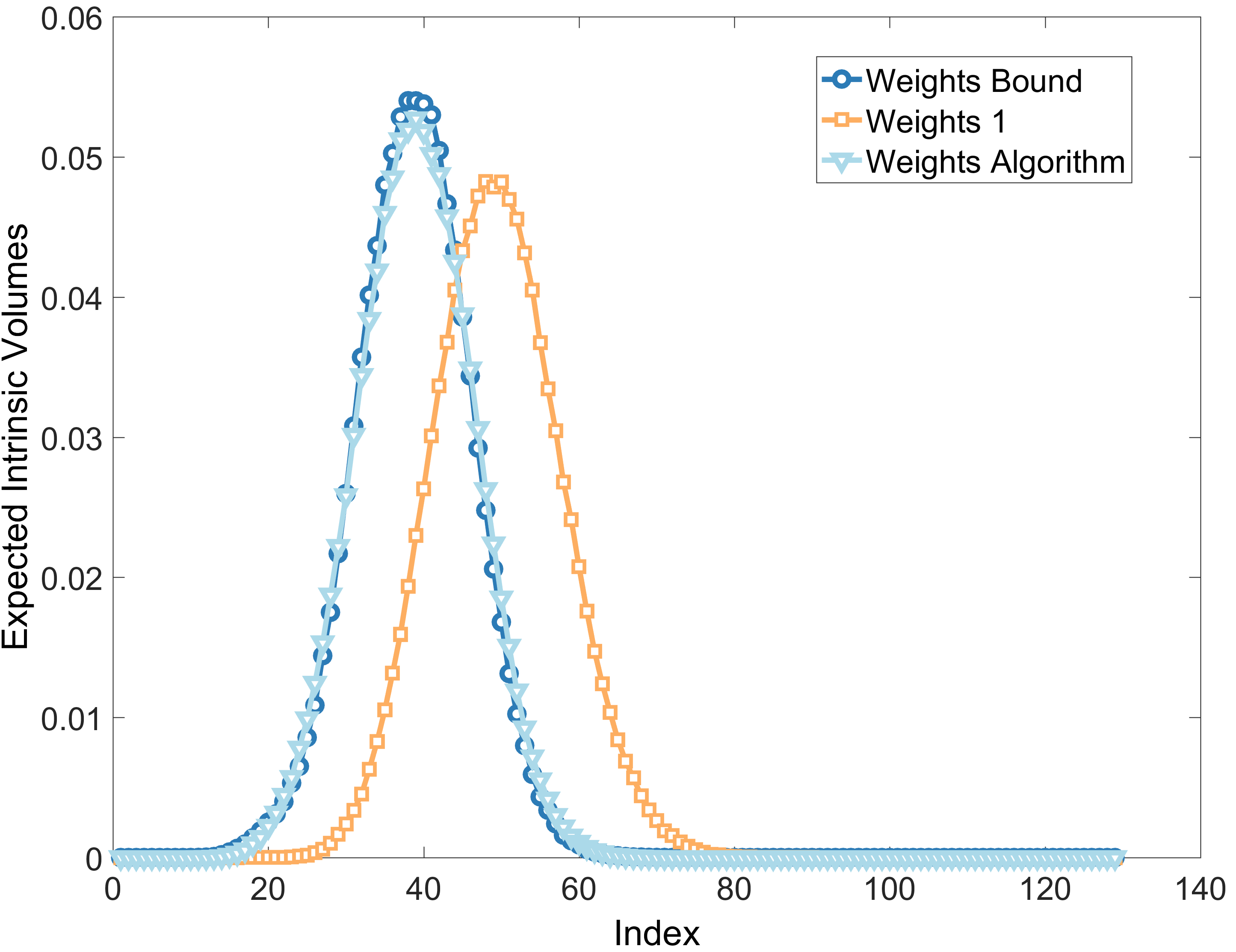}
        \end{subfigure}
  \centering
  \caption{The figure on the left shows the estimated recovery frequency; the figure on the right displays the expected intrinsic volumes estimated with our Monte Carlo. The dark blue lines are the results with our weights from Theorem \ref{teo:ProtoOptimal}, the orange lines are the results with the classical approach, and the light blue lines are the results with the weights obtained with the algorithm of Section \ref{section:DescentAlgo}.  }
  \label{fig:bernulli}
\end{figure}
For the first example we generate random vectors $\mathbf{X}_0 \in \RR^{128}$ using the following distribution: we partition the entries of $\mathbf{X}_0$ into 8 blocks of the same length, and in every block we take the entries to be i.i.d. random variables with a Bernoulli distribution, where the distribution parameter is defined by the index of the block. The parameters are given by
\vspace{10pt}
\begin{center}
\begin{tabular}{|c|c|c|c|c|c|c|c|}
\hline
 \hspace{40pt} & \hspace{40pt}  &\hspace{15pt}  & \hspace{15pt} & \hspace{15pt} & \hspace{15pt} & \hspace{15pt} & \hspace{40pt} 
 \\   \hline
\omit\mathstrut\upbracefill & \omit\mathstrut\upbracefill &\omit & \omit & \omit\dots   &\omit  & \omit& \omit\mathstrut\upbracefill \\
\omit $B(1, 2^{-1})$  &\omit $B(1, 2^{-2})$ & \omit & \omit & \omit & \omit& \omit& \omit $B(1, 2^{-8})$
\end{tabular}. 
\end{center}
\vspace{10pt}
Since the probability of being non-zero decreases exponentially in every block, then vectors with this distribution are sparse with high probability. 
For this particular example we present an histogram of the statistical dimensions of cones generated by the points with this distribuition. In each histogram, we draw $1000$ random supports $I$ and we estimate $\delta(I,\mathbf{w})$ with a Monte Carlo using $100$ points as described at the end of Section \ref{section: Formulas}. For this case, these statistical dimensions are very concentrated, as Figure \ref{fig:bernulliHist} shows. Thus, the hypothesis for Theorem \ref{theorem:concentratedDimensions}  are satisfied for as small parameter $\eta$. Therefore, problem \eqref{problem:w}, with our weights, is guaranteed to have higher success probability than \eqref{problem:1}.

\begin{figure}[htbp]
\begin{subfigure}[b]{0.48\textwidth}
                \includegraphics[width=\textwidth]{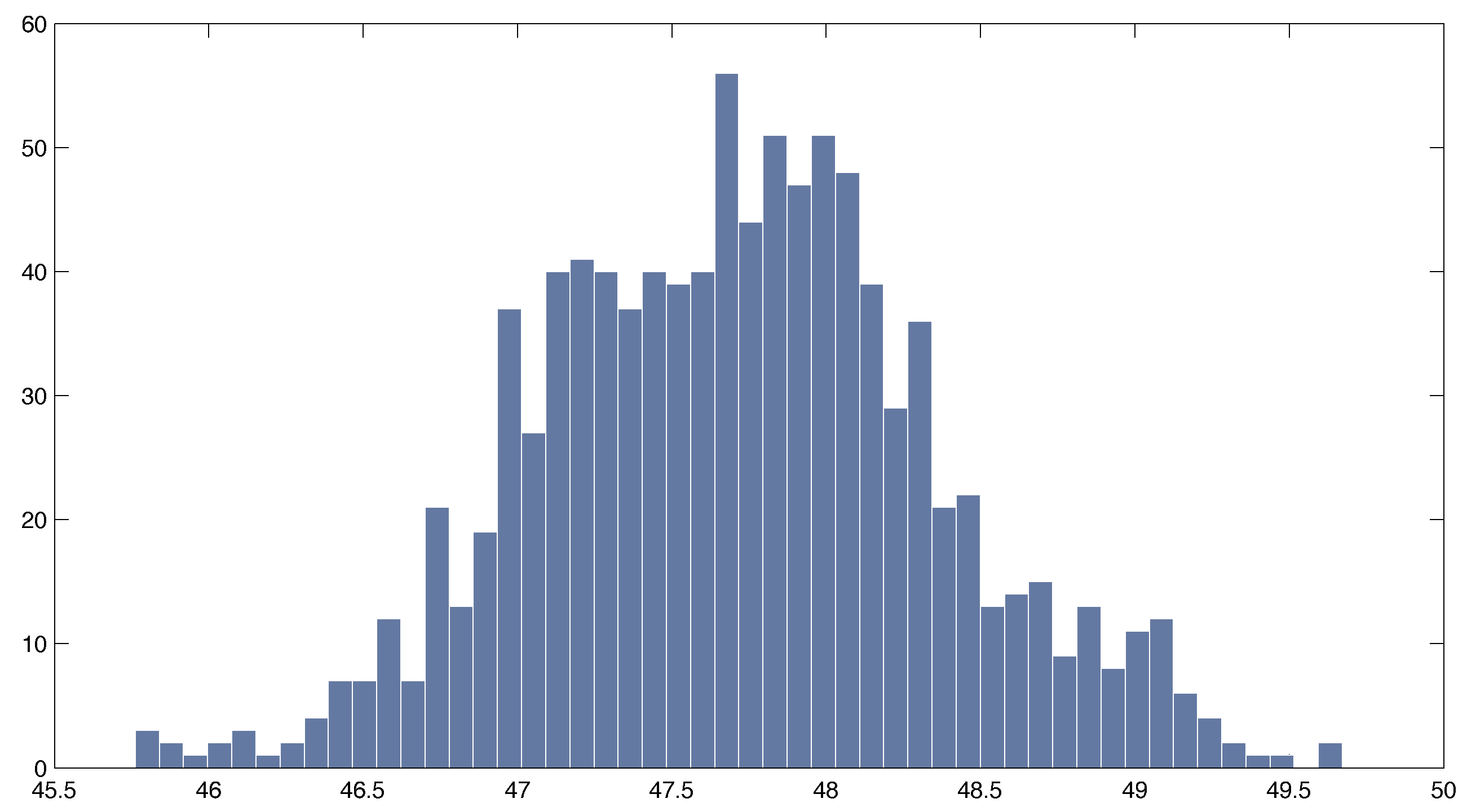}
        \end{subfigure}
        ~ 
        \begin{subfigure}[b]{0.48\textwidth}
                \includegraphics[width=\textwidth]{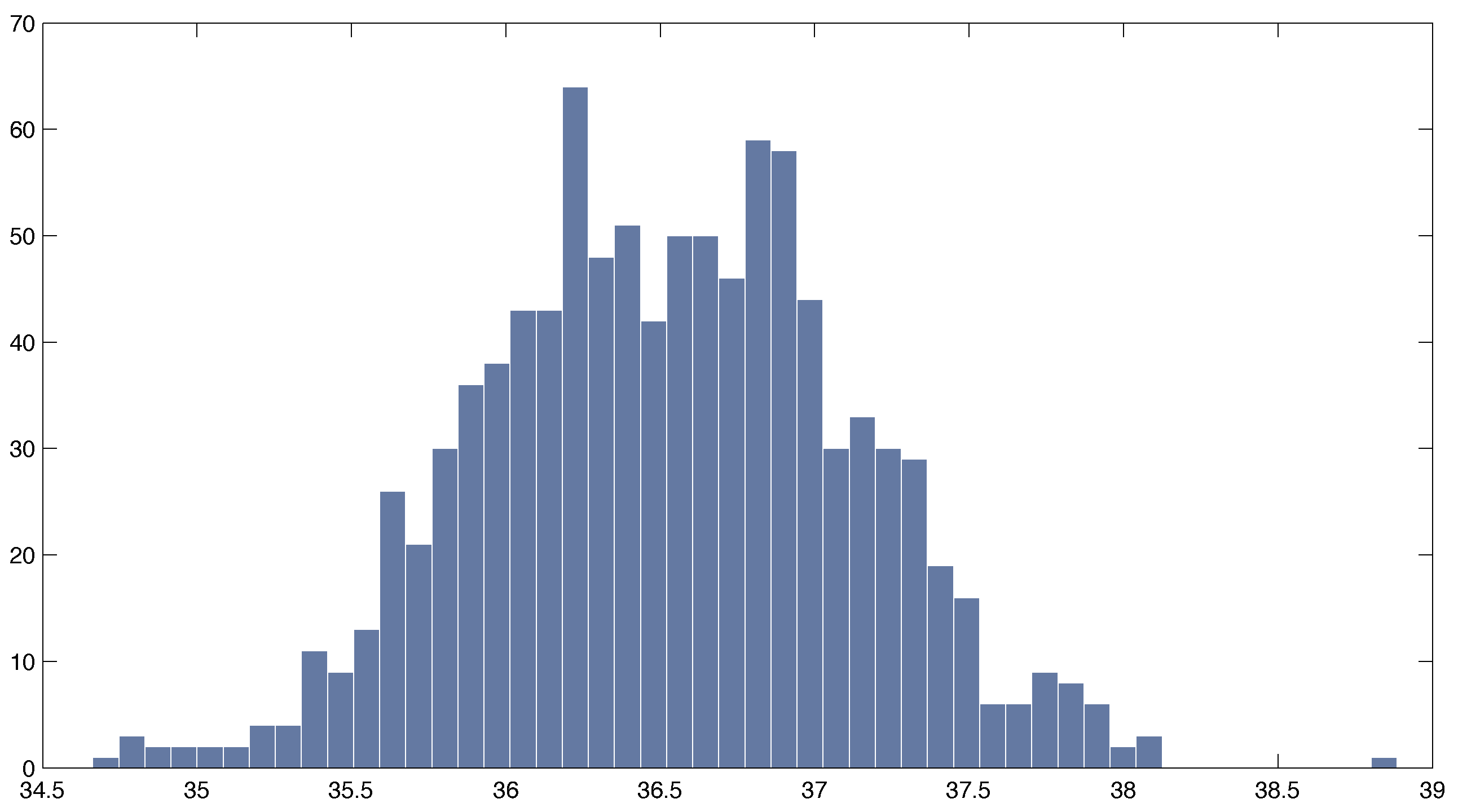}
        \end{subfigure}
  \centering
  \caption{Histograms of the statistical dimensions of random cones generated with independent Bernoulli entries. On the left the histogram with cones with weights one. On the right the histogram with cones with the weights found with \eqref{eq:formulaWeights}. }
  \label{fig:bernulliHist}
\end{figure}

\subsection{A non-sharp case}\label{NonSharp} For this experiment our choice of weights is not always the best. Here we consider an artificial distribution with four possible supports, each one with probability $1/4$, as in Figure \ref{fig:badCaseWeights}.
\begin{figure}[htbp]
  \centering
  \includegraphics[width=0.95\textwidth]{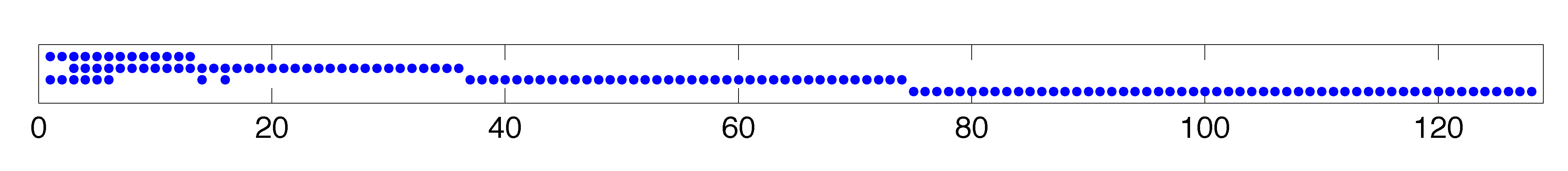}
  \caption{Possible supports: Each row represents a support; a blue point is a $1$ and white is a $0$.}
  \label{fig:badCaseWeights}
\end{figure}
One particular characteristic of this distribution is that the intrinsic volumes for the descent cones corresponding to different supports are concentrated around different locations. Since all the supports have equal probability, the expected intrinsic volumes are not concentrated, as we show in Figure \ref{fig:badCaseNus}.  Intuitively, what Theorem \ref{theorem:concentratedDimensions} is showing in this case is that the transition is not sharp, and therefore by minimizing $\bar{\delta}$ we are not necessarily reducing the probability of failure, i.e., the tail of the intrinsic volumes. We ran the descent algorithm proposed in Section \ref{section:DescentAlgo} starting at weights one. After two iterations, we obtained very similar weights to the ones found using the bound in Theorem \ref{teo:ProtoOptimal}. 
\begin{figure}[htbp]
  \centering
  \begin{subfigure}[b]{0.48\textwidth}
                \includegraphics[width=\textwidth]{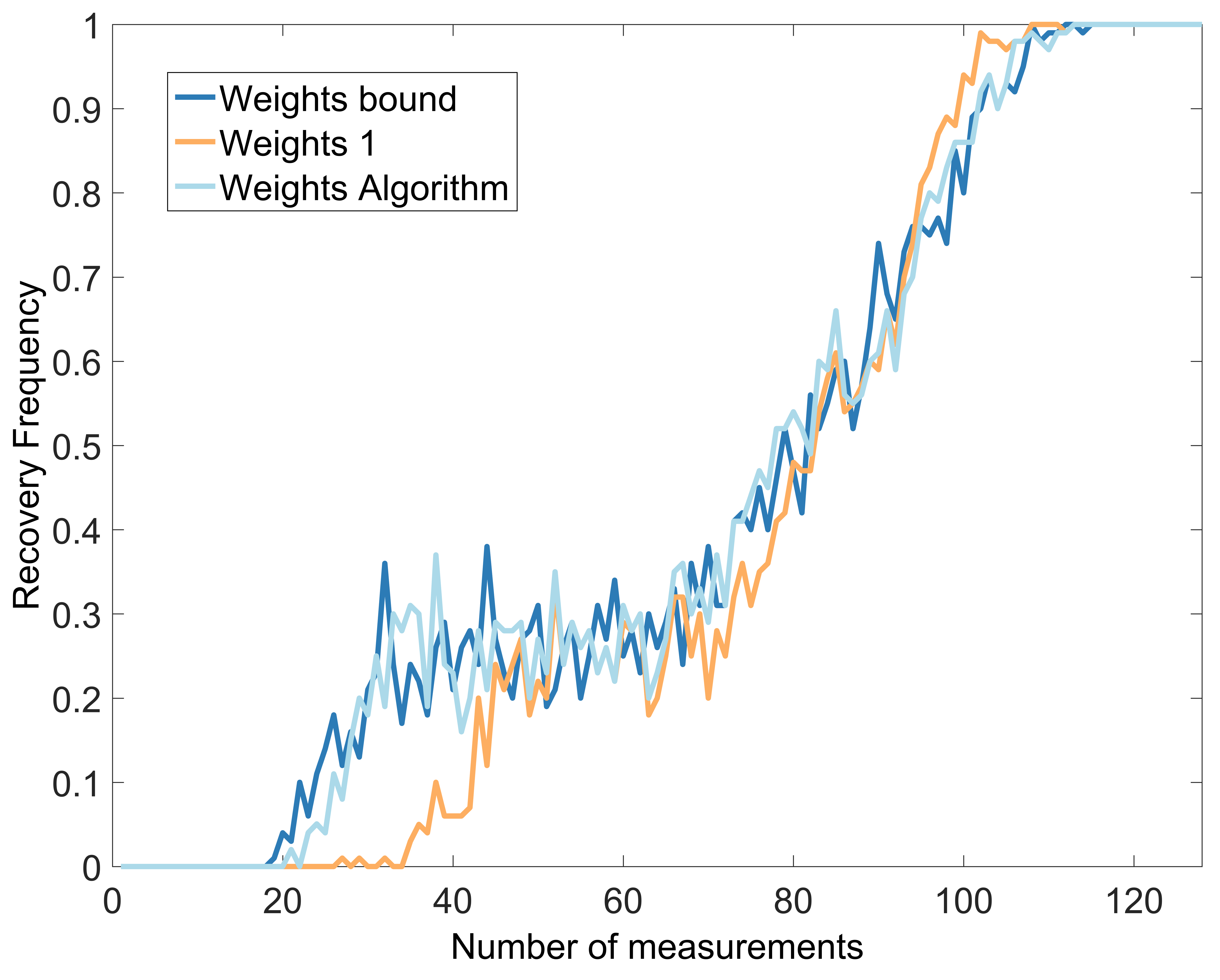}
        \end{subfigure}
        ~ 
        \begin{subfigure}[b]{0.48\textwidth}
                \includegraphics[width=\textwidth]{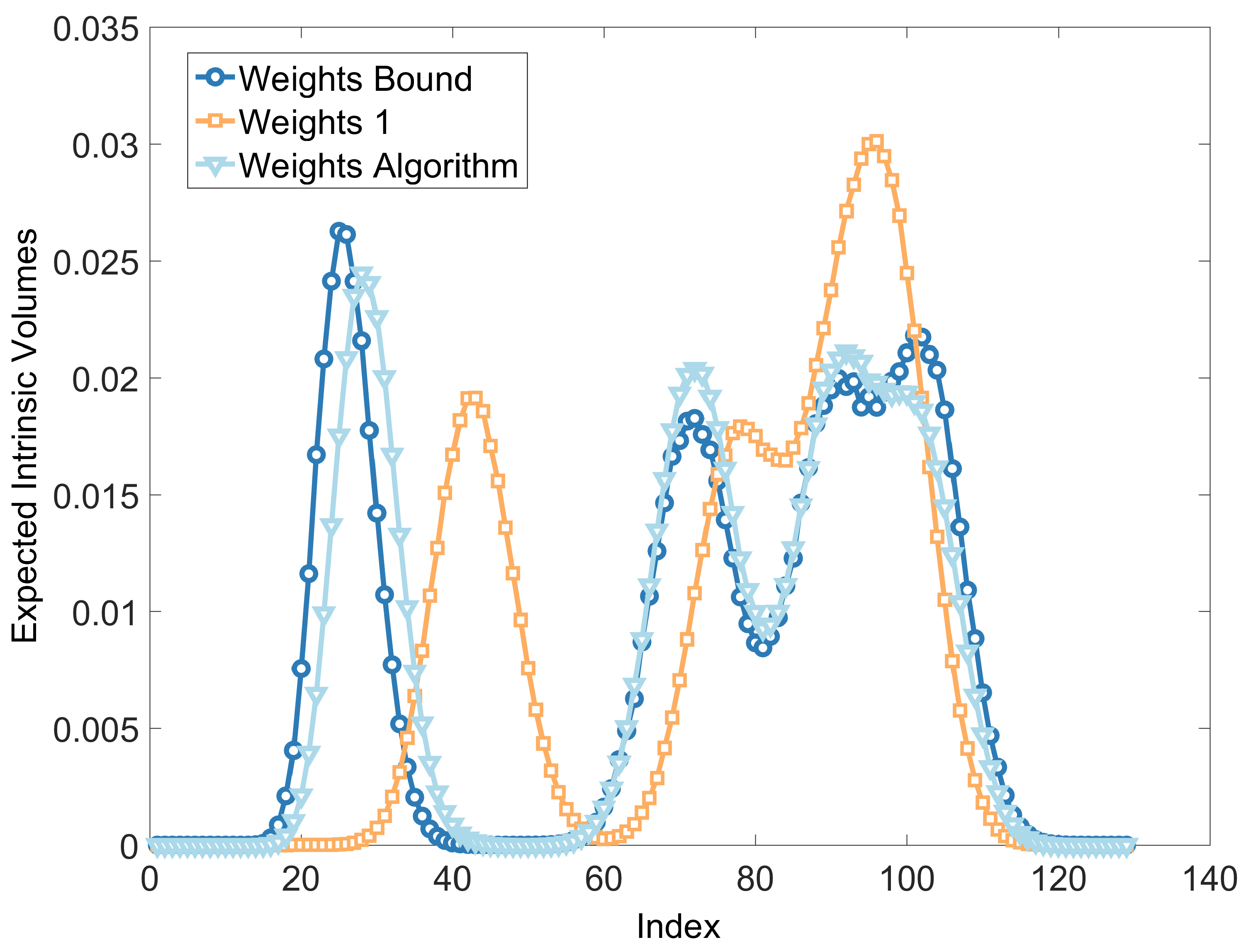}
        \end{subfigure}
  \caption{Expected instrinsic volumes of the non-sharp experiment. Same conventions as in Figure \ref{fig:bernulli}.}
  \label{fig:badCaseNus}
\end{figure}

\subsection{MRI}
\begin{wrapfigure}{r}{0.32\textwidth}
  \begin{center}
    \includegraphics[width=0.3\textwidth]{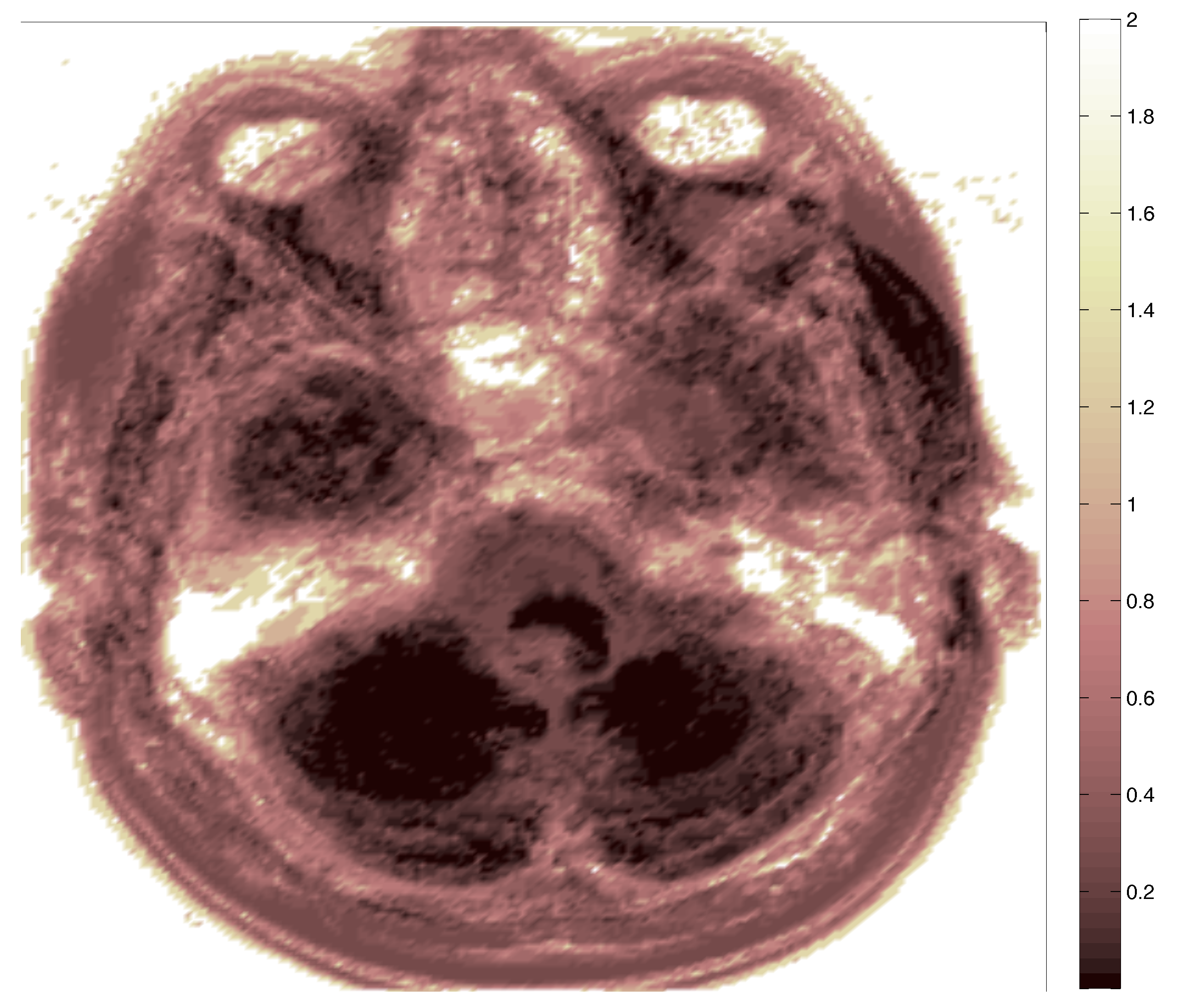}
  \end{center}
  \caption{Heat map of one of the weights found.}
\end{wrapfigure}
We took real brain MRI from 5 patients. The resonances were composed of multiple 2D slides of the brain. In order to promote a sensible distribution in this setting, we restricted only to the slides at eye-level height. Subsequently, we centered and cut the images, increasing the ratio between the non-zero entries and the size of the image as much as possible. After this process we ended up with $47$ grayscale images of size $215 \times 184$ pixels. 

We performed a row-by-row reconstruction and tested the weights described in Theorem \ref{teo:ProtoOptimal}, using a leave-one-out cross-validation to measure the frequency of perfect recovery for several number of measurements $m$. In other words, we selected the $i$th image and used the rows of the other $46$ images to obtain the empirical distribution $\hat{\mathcal{F}_i}$, which we then used to compute the weights $\hat{\mathbf{w}}_i$ and measure the frequency of perfect recovery of the $i$th image. We repeated the procedure for all the images and took the average of the frequencies. Figure \ref{fig:MRI} shows the results.
\begin{figure}[htbp]
  \centering
  \includegraphics[width=0.65\textwidth]{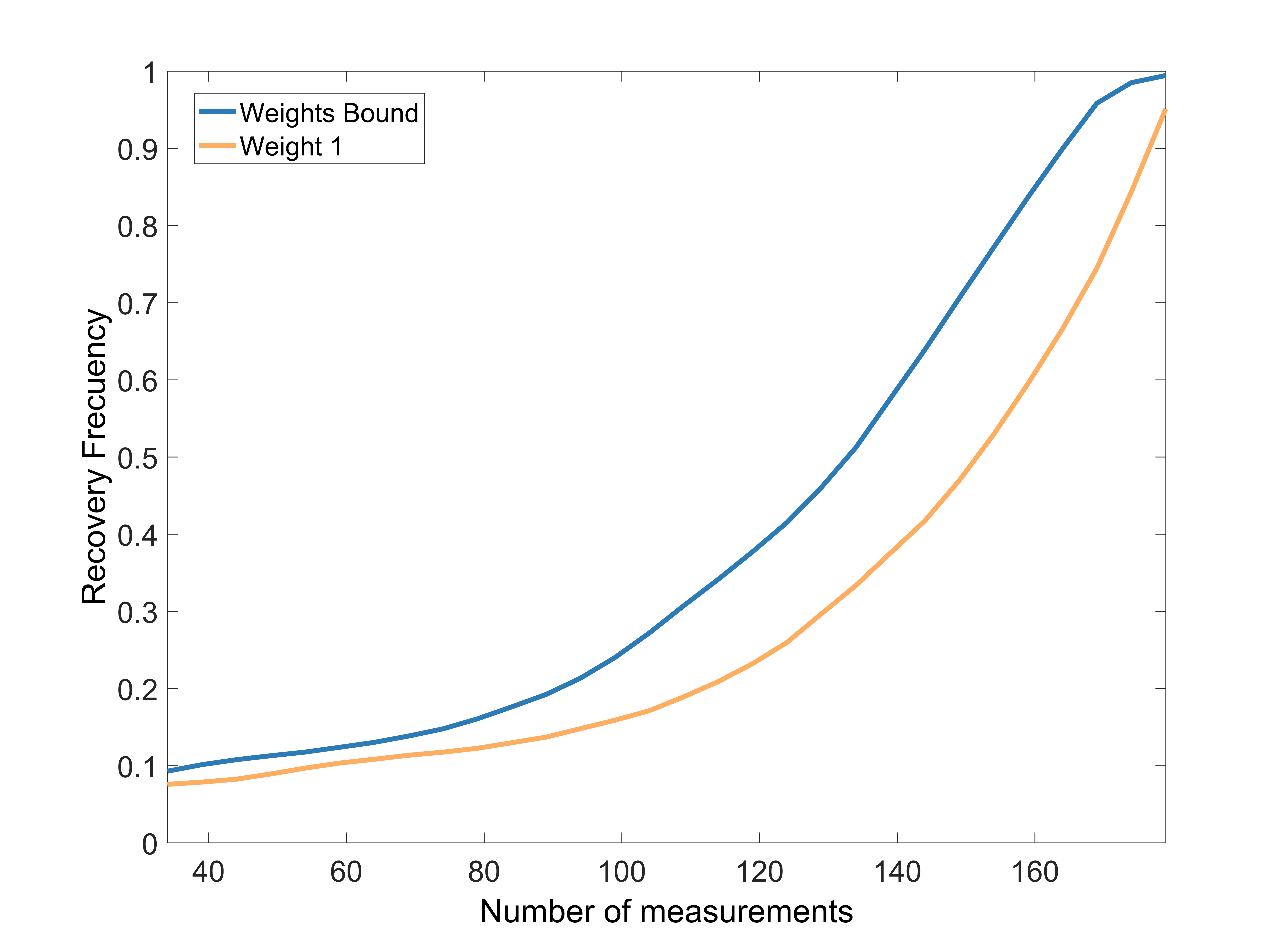}
  \caption{Frequency of perfect recovery of the MRI experiment. Same conventions as in Figure \ref{fig:bernulli}. }
  \label{fig:MRI}
\end{figure}

\section{Conclusions and open questions} \label{section:conclusions}
In this work, we showed that it is possible to take advantage of prior statistical information from a signal, i.e. its support distribution, to improve the standard compressed sensing method. In particular, we developed a method to shift the inflection point of the statistical dimension by minimizing an appropriately weighted $\ell_1$-norm. To do so, we presented two ways to find good weights. Our methods pick the weights aiming to minimize the expected statistical dimension, $\bar{\delta}(\ww)$. The first method uses an explicit formula, \eqref{eq:formulaWeights}, based on an upper bound and the other uses a numerical algorithm based on a Monte Carlo gradient descent procedure. 

Moreover we proved through experiments that the proposed methods are effective in many contexts in the sense that they increase the success probability for any $m$. However, in the case where the expected intrinsic volumes were not concentrated around their mean, our methods do not always beat the unweighted approach. It appears that under these circumstances minimizing the statistical dimensions may spread the expected intrinsic volumes, increasing their variance and their tail, and thus the failure probability. It is therefore natural to ask: What to minimize when the statistical dimension does not work? and how to find optimal weights in those cases? We believe that one way to solve this problem is by fixing $m$, the number of measurements, and choosing weights that minimize the probability of failure.

\section*{Acknowledgments}
The first, second and fourth authors were supported by Universidad de los Andes under the Grant ``Fondo de Apoyo a Profesores Asistentes''(FAPA). We would like to thank Mario Andres Valderrama for providing us with the brain MRI data. We would also like to thank Dennis Amelunxen, Martin Lotz and Javier Pe\~na for useful conversations during the completion of this work. Finally, we would like to thank the anonymous reviewers for their thorough reading of this manuscript and
for their recommendations.


\bibliographystyle{alpha}
  \bibliography{biblio}

\newcommand{\etalchar}[1]{$^{#1}$}
\begin{thebibliography}{CRPW12}

\bibitem[AL15]{Ame15}
Dennis Amelunxen and Martin Lotz.
\newblock Intrinsic volumes of polyhedral cones: a combinatorial perspective.
\newblock Arxiv preprint arXiv:1512.06033, 2015.

\bibitem[ALMT14]{Ame}
D.~Amelunxen, M.~Lotz, M.~B. McCoy, and J.~A. Tropp.
\newblock Living on the edge: phase transitions in convex programs with random
  data.
\newblock {\em Information and Inference}, 2014.

\bibitem[CRPW12]{cha}
V.~Chandrasekaran, B.~Recht, P.~A. Parrilo, and A.~S. Willsky.
\newblock The convex geometry of linear inverse problems.
\newblock {\em Foundations of Computational Mathematics}, 12(6):805--849, 2012.

\bibitem[CRT06]{Candes2006}
E.~J. Cand\`es, J.~Romberg, and T.~Tao.
\newblock Robust uncertainty principles: Exact signal reconstruction from
  highly incomplete frequency information.
\newblock {\em IEEE Trans. Inf. Theor.}, 52(2):489--509, February 2006.

\bibitem[CT05]{Candes2005}
E.~J. Cand\`es and T.~Tao.
\newblock Decoding by linear programming.
\newblock {\em IEEE Trans. Inf. Theor.}, 51(12):4203--4215, December 2005.

\bibitem[CWB08]{candes2008enhancing}
E.~J. Cand\`es, M.~B. Wakin, and S.~P. Boyd.
\newblock Enhancing sparsity by reweighted $\ell_1$ minimization.
\newblock {\em Journal of Fourier analysis and applications}, 14(5-6):877--905,
  2008.

\bibitem[Don06]{Donoho06}
David~L. Donoho.
\newblock Compressed sensing.
\newblock {\em IEEE Trans. Inform. Theory}, 52:1289--1306, 2006.

\bibitem[DT09]{dontan}
David~L. Donoho and Jared Tanner.
\newblock Counting faces of randomly projected polytopes when the projection
  radically lowers dimension.
\newblock {\em Journal of the American Mathematical Society}, 22(1):1--53,
  2009.

\bibitem[FMSY12]{friedlander2012recovering}
M.~P. Friedlander, H.~Mansour, R.~Saab, and {\"O}.~Yilmaz.
\newblock Recovering compressively sampled signals using partial support
  information.
\newblock {\em Information Theory, IEEE Transactions on}, 58(2):1122--1134,
  2012.

\bibitem[GNP14]{Goldstein}
Larry Goldstein, Ivan Nourdin, and Giovanni Peccati.
\newblock Gaussian phase transitions and conic intrinsic volumes: Steining the
  steiner formula.
\newblock Arxiv preprint arXiv:1411.6265, 2014.

\bibitem[Hoe63]{hoeffding}
Wassily Hoeffding.
\newblock Probability inequalities for sums of bounded random variables.
\newblock {\em Journal of the American statistical association},
  58(301):13--30, 1963.

\bibitem[KXH{\etalchar{+}}09]{khajehnejad2009weighted}
M.~A. Khajehnejad, W.~Xu, B.~Hassibi, et~al.
\newblock Weighted $\ell_1$ minimization for sparse recovery with prior
  information.
\newblock In {\em Information Theory, 2009. ISIT 2009. IEEE International
  Symposium on}, pages 483--487. IEEE, 2009.

\bibitem[KXH{\etalchar{+}}11]{khajehnejad2011analyzing}
M.~A. Khajehnejad, W.~Xu, B.~Hassibi, et~al.
\newblock Analyzing weighted minimization for sparse recovery with nonuniform
  sparse models.
\newblock {\em Signal Processing, IEEE Transactions on}, 59(5):1985--2001,
  2011.

\bibitem[MDR14]{mota2014compressed}
J.~F.~C. Mota, N.~Deligiannis, and M.~R.~D. Rodrigues.
\newblock Compressed sensing with prior information: Optimal strategies,
  geometry, and bounds.
\newblock ArXiv preprint arXiv:1408.5250, 2014.

\bibitem[MP15]{misra2015weighted}
S.~Misra and P.~A. Parrilo.
\newblock Weighted-minimization for generalized non-uniform sparse model.
\newblock {\em Information Theory, IEEE Transactions on}, 61(8):4424--4439,
  2015.

\bibitem[RW15]{rauhut2015interpolation}
H.~Rauhut and R.~Ward.
\newblock Interpolation via weighted $\ell_1$ minimization.
\newblock {\em Applied and Computational Harmonic Analysis}, 2015.

\bibitem[SRR15]{schiebinger2015superresolution}
G.~Schiebinger, E.~Robeva, and B.~Recht.
\newblock Superresolution without separation.
\newblock ArXiv preprint arXiv:1506.03144, 2015.

\bibitem[SW08]{schneider2008}
R.~Schneider and W.~Weil.
\newblock {\em Stochastic and Integral Geometry}.
\newblock Probability and Its Applications. Springer Berlin Heidelberg, 2008.

\bibitem[VL10]{vaswani2010modified}
N.~Vaswani and W.~Lu.
\newblock Modified-cs: Modifying compressive sensing for problems with
  partially known support.
\newblock {\em Signal Processing, IEEE Transactions on}, 58(9):4595--4607,
  2010.

\bibitem[VS87]{Ver}
A.~M. Vershik and P.~V. Sporyshev.
\newblock An asymptotic estimate of the average number of steps of the
  parametric simplex method.
\newblock {\em USSR Comput. Math. Math. Phys.}, 26(3):104--113, 1987.

\bibitem[Xu10]{xu2010compressive}
W.~Xu.
\newblock {\em Compressive sensing for sparse approximations: constructions,
  algorithms, and analysis}.
\newblock PhD thesis, California Institute of Technology, 2010.

\end{thebibliography}

\end{document}